\definecolor{default}{rgb}{.26,.39,.77}
\newtheorem{theorem}{Theorem}[section]
\newtheorem{corollary}[theorem]{Corollary}
\newtheorem{definition}[theorem]{Definition}
\newtheorem{proposition}[theorem]{Proposition}
\theoremstyle{definition}
\newtheorem{remark}[theorem]{Remark}
\numberwithin{equation}{section}
\renewcommand{\Re}{\mathrm{Re}}
\renewcommand{\Im}{\mathrm{Im}}
\renewcommand{\epsilon}{\varepsilon}
\DeclareMathOperator*{\vol}{vol}
\patchcmd{\section}{\scshape}{\bfseries}{}{}
\renewcommand{\@secnumfont}{\bfseries}
\newcommand{\norm}[1]{\left\lVert#1\right\rVert}
\newcommand{\MYhref}[3][black!39!blue]{\href{#2}{\color{#1}{#3}}}
\renewcommand*\backref[1]{$\uparrow$\thinspace\ifx#1\relax \else #1 \fi}
\begin{document}

\title[Mean Values and Quantum Variance in Higher Rank]{Mean Values and Quantum Variance for Degenerate Eisenstein Series of Higher Rank}

\author[Dimitrios Chatzakos]{Dimitrios Chatzakos\,\orcidlink{0000-0003-2124-5552}}
\address[Dimitrios Chatzakos]{Department of Mathematics, School of Natural Sciences, University of Patras, 265 04 Rion, Patras, Greece}
\urladdr{\MYhref{https://dchatzakos.math.upatras.gr}{https://dchatzakos.math.upatras.gr}}
\email{dchatzakos@math.upatras.gr}

\author[Corentin Darreye]{Corentin Darreye\,\orcidlink{0000-0002-7421-8147}}
\address[Corentin Darreye]{}
\urladdr{\MYhref{https://www.mathgenealogy.org/id.php?id=271120}{https://www.mathgenealogy.org/id.php?id=271120}}
\email{corentin.darreye@orange.fr}

\author[Ikuya Kaneko]{Ikuya Kaneko\,\orcidlink{0000-0003-4518-1805}}
\address[Ikuya Kaneko]{The Division of Physics, Mathematics and Astronomy, California Institute of Technology, 1200 East California Boulevard, Pasadena, California 91125, United States of America}
\urladdr{\MYhref{https://sites.google.com/view/ikuyakaneko/}{https://sites.google.com/view/ikuyakaneko/}}
\email{ikuyak@icloud.com}

\thanks{DC acknowledges the financial support from the IdEx (Initiative d'Excellence) Postdoctoral Fellowship at the Institut de Math\'{e}matiques de Bordeaux, and from ELKE of the University of Patras (MEDIKOS~Program No. $82043$). IK expresses gratitude for the diverse support from the Masason Foundation from 2019 to 2024.}

\subjclass[2020]{Primary \MYhref[black]{https://mathscinet.ams.org/mathscinet/msc/msc2020.html?t=&s=11f12}{11F12}, \MYhref[black]{https://mathscinet.ams.org/mathscinet/msc/msc2020.html?t=&s=11f72}{11F72}; Secondary \MYhref[black]{https://mathscinet.ams.org/mathscinet/msc/msc2020.html?t=&s=58j51}{58J51}, \MYhref[black]{https://mathscinet.ams.org/mathscinet/msc/msc2020.html?t=&s=81q50}{81Q50}}

\keywords{quantum unique ergodicity, mean value, quantum variance, degenerate Eisenstein series, higher rank, Watson--Ichino formula, moments of $L$-functions}

\cleanlookdateon

\date{\today}

\begin{abstract}
We investigate the mean value of the inner product of squared $\mathrm{GL}_{n}$ degenerate maximal parabolic Eisenstein series against a smooth compactly supported function lying in a restricted space of incomplete Eisenstein series induced from a $\mathrm{SL}_{2}(\mathbb{Z})$ Hecke--Maa{\ss} cusp form $\varphi$. Our result breaks the fundamental threshold with a polynomial power-saving beyond the pointwise implications of the generalised Lindel\"{o}f hypothesis for $L$-functions attached to $\varphi$. Furthermore, we evaluate the archimedean quantum variance and establish approximate orthogonality, expanding upon Zhang's (2019) work on quantum unique ergodicity for $\mathrm{GL}_{n}$ degenerate maximal parabolic Eisenstein series as well as Huang's (2021) work on quantum variance for $\mathrm{GL}_{2}$ Eisenstein series. Despite the theoretical strength of these manifestations, our argument relies exclusively on the Watson--Ichino-type formula for incomplete Eisenstein series of type $(2, 1, \ldots, 1)$ and Jutila's (1996) asymptotic formula for the second moment of $L$-functions attached to $\varphi$ in long intervals, supplemented by a standard analytical toolbox.
\end{abstract}

\maketitle
\tableofcontents

\section{Introduction}

\subsection{Historical prelude}
Quantum ergodicity represents a foundational topic in the theory of automorphic forms. Its origins pivot around the realm of mathematical physics where, for a smooth compact Riemannian manifold $\mathcal{M}$, one addresses the behaviour of eigenfunctions of the Laplace--Beltrami operator on $\mathcal{M}$ as the eigenvalue tends to infinity. This type of question pertains to the dynamics of the ergodic flow. Landmark results of \v{S}nirel'man~\cite{Shnirelman1974,Shnirelman1993},\footnote{As Zelditch notes~\cite[Page~921]{Zelditch1994}, a more rigorous paper by \v{S}nirel'man was located by Michael~Tabor at Columbia University, and a translation from the Russian original by Semyon Dyatlov is now available~\cite{Shnirelman2022}. The interested reader is directed to~\cite{Shnirelman2023} for the historical background.} Colin de Verdi\`{e}re~\cite{ColindeVerdiere1985}, and Zelditch~\cite{Zelditch1987} assert in general that if the geodesic flow on the unit cotangent bundle is ergodic, then there exists a density one subsequence of Laplace eigenfunctions whose mass equidistributes on $\mathcal{M}$ as the eigenvalue tends to infinity. Rudnick and Sarnak~\cite[Page~196]{RudnickSarnak1994} predicted that when $\mathcal{M}$ is hyperbolic (i.e. of negative sectional curvature), this property holds for the entire sequence of Laplace eigenfunctions, namely the probability measures $|\varphi|^{2} d\mu$ converge weakly to the normalised measure $\frac{1}{\vol(\mathcal{M})} d\mu$, where $d\mu$ is the volume element on $\mathcal{M}$. This statement is referred to as the quantum unique ergodicity (QUE) conjecture; see~\cite{Dyatlov2022,Zelditch1992,Zelditch2006,Zelditch2010,Zelditch2019} for surveys of quantum ergodicity, and~\cite{BisainHumphriesMandelshtamWalshWang2024,Marklof2006,Sarnak1995,Sarnak2003,Sarnak2011} for surveys of arithmetic quantum chaos.

A parallel line of inquiry extends to higher rank homogeneous spaces $\mathbb{X} = \Gamma \backslash G/K$, where new perspectives come into the picture. If $\mathbb{X}$ has rank $r \geq 2$, then the centre of the universal enveloping algebra of $\mathbb{X}$ is generated by $r$ distinct Casimir operators. For $n \geq 2$, we define
\begin{equation*}
\mathbb{X}_{n} \coloneqq \mathrm{SL}_{n}({\mathbb{Z}}) \backslash \mathbb{H}^{n} \cong \mathrm{Z}(\mathbb{R}) \, \mathrm{SL}_{n}({\mathbb{Z}}) \backslash \mathrm{GL}_{n}({\mathbb{R}}) / \mathrm{O}_{n}({\mathbb{R}}),
\end{equation*}
where $\mathbb{H}^{n}$ denotes the standard generalised upper half-plane. This space has rank $r = n-1$, wherein automorphic forms are eigenfunctions of all the Casimir operators. Moreover, $\mathbb{X}_{n}$ is also equipped with the continuous spectrum spanned by Langlands Eisenstein series.

For $n = 2$, the problem was first investigated in the seminal work of Luo and Sarnak~\cite[Theorem~1.1]{LuoSarnak1995}, which proved a modified version of QUE for Eisenstein series on the modular orbifold $\mathbb{X}_{2}$. Furthermore, Watson~\cite[Theorem~4]{Watson2002} demonstrated that effective QUE for Hecke--Maa{\ss} cusp forms follows from subconvex bounds for triple product $L$-functions. The combined work of Lindenstrauss~\cite[Theorem~1.4]{Lindenstrauss2006} and Soundararajan~\cite[Theorem~1]{Soundararajan2010-2} resolves the QUE conjecture on $\mathbb{X}_{2}$, albeit with an ineffective rate of equidistribution.

For $n \geq 3$, QUE for automorphic forms on compact quotients of $\mathrm{SL}_{n}({\mathbb R})$ was first examined by Silberman and Venkatesh~\cite{Silberman2015,SilbermanVenkatesh2007,SilbermanVenkatesh2019} using ergodic methods. In the noncompact setting, Zhang~\cite{Zhang2017,Zhang2019} generalised the result of Luo and Sarnak~\cite[Theorem~1.1]{LuoSarnak1995} to $n \geq 3$, thereby establishing a modified version of QUE for degenerate maximal parabolic (unitary) Eisenstein series $E_{(n-1, 1)}(z, \frac{1}{2}+it, \mathbbm{1})$ induced from the maximal parabolic subgroup $\mathrm{P}_{(n-1, 1)}(\mathbb{Z})$ and the constant eigenfunction $\varphi_{0} = \mathbbm{1}$. For any $n \geq 2$, any fixed test function $f \in \mathcal{C}_{c}^{\infty}(\mathbb{X}_{n})$, and any $t \in \mathbb{R}$, we denote the $n$-dimensional Petersson inner product by
\begin{equation}\label{eq:inner-product}
\mu_{n, t}(f) \coloneqq \left\langle f, \left|E_{(n-1, 1)} \left(\cdot, \frac{1}{2}+it, \mathbbm{1} \right) \right|^{2} \right\rangle_{n} 
= \int_{\mathbb{X}_{n}} f(z) \left|E_{(n-1, 1)} \left(z, \frac{1}{2}+it, \mathbbm{1} \right) \right|^{2} d\mu_{n}(z),
\end{equation}
where $d\mu_{n}(z)$ denotes the standard Haar measure. Zhang~\cite[Theorem~1.3]{Zhang2019} then proved the asymptotic formula
\begin{equation}\label{eq:Zhang}
\mu_{n, t}(f) \sim \frac{2 \log t}{\Lambda(n)} \langle f, 1 \rangle_{n}
\end{equation}
as $t \to \infty$, where $\Lambda(s) \coloneqq \pi^{-\frac{s}{2}} \Gamma(\frac{s}{2}) \zeta(s)$ denotes the completed Riemann zeta function.\footnote{This notation ought not to be confused with the von Mangoldt function, particularly when its argument is a positive integer $n \in \mathbb{N}$.}~The proof is predicated upon the spectral expansion and the estimation of $\mu_{n, t}$ for all automorphic forms in $L^{2}(\mathbb{X}_{n})$. If $n = 2$, then $f$ decomposes into Hecke--Maa{\ss} cusp forms $\varphi$ with spectral parameter $t_{\varphi} > 0$, the unitary Eisenstein series $E(z, \frac{1}{2}+it)$, and the constant eigenfunction $\varphi_{0} = \mathbbm{1}$. To verify~\eqref{eq:Zhang} for $n = 2$, Luo and Sarnak~\cite[Proposition~2.1]{LuoSarnak1995} demonstrated that the contribution of the discrete spectrum is negligibly small, namely $\mu_{2, t}(\varphi) \ll_{\varphi, \epsilon} (1+|t|)^{-\frac{1}{6}+\epsilon}$, whereas the assumption of the generalised Lindel\"{o}f hypothesis implies of improved exponent of $-\frac{1}{2}+\epsilon$. Furthermore, the mean value result due to Huang~\cite[Theorem~1]{Huang2021-3} asserts~that if $\varphi$ is an even Hecke--Maa{\ss} cusp form, then
\begin{equation*}
\frac{1}{T} \int_{T}^{2T} \mu_{2, t}(\varphi) dt = o_{\varphi}(T^{-\frac{1}{2}}).
\end{equation*}
Note that $\mu_{2, t}(\varphi) = 0$ identically if $\varphi$ is odd. This surpasses the barrier beyond the pointwise implications of the generalised Lindel\"{o}f hypothesis, albeit with an infinitesimal improvement. Given any two Hecke--Maa{\ss} cusp forms $\varphi$ and $\psi$, we define the quantum variance by\footnote{This definition aligns with that of Huang~\cite[Page~1226]{Huang2021-3}.}
\begin{equation}\label{eq:Q-2}
Q_{2}(\varphi, \psi) \coloneqq \lim_{T \to \infty} \frac{1}{\log T} \int_{T}^{2T} \mu_{2, t}(\varphi) \overline{\mu_{2, t}(\psi)} dt 
\end{equation}
Then the second result of Huang~\cite[Theorem~2]{Huang2021-3} asserts that
\begin{equation*}
Q_{2}(\varphi, \psi) = 
\begin{dcases}
C_{2}(\varphi) L \left(\frac{1}{2}, \varphi \right)^{2} V_{2}(\varphi) & \text{if $\varphi = \psi$ is even},\\
0 & \text{otherwise},
\end{dcases}
\end{equation*}
where $C_{2}(\varphi)$ is given by a product of local densities, $L(s, \varphi)$ denotes the standard $L$-function attached to $\varphi$, and $V_{2}(\varphi)$ specialises to the diagonal case $\varphi = \psi$ of a nonnegative Hermitian form $V_{2}(\varphi, \psi)$ defined on $\mathcal{C}_{c}^{\infty}(\mathbb{X}_{2})$, namely
\begin{equation}\label{eq:V-2}
V_{2}(\varphi) \coloneqq \frac{|\Gamma(\frac{1}{4}+\frac{it_{\varphi}}{2})|^{4}}{2\pi|\Gamma(\frac{1}{2}+it_{\varphi})|^{2}}.
\end{equation}
Although no currently available ergodic technique renders the rate of decay in QUE effective, Luo and Sarnak~\cite[Theorem~1.2]{LuoSarnak1995} considered the quantum variance and demonstrated an optimal rate of convergence on average, which can be further refined for test functions $f$ in the subspace $\mathcal{C}_{c, 0}^{\infty}(\mathbb{X}_{2})$ of $\mathcal{C}_{c}^{\infty}(\mathbb{X}_{2})$ consisting of all functions with zero mean value $\langle f, 1 \rangle_{2} = 0$~and vanishing zeroth Fourier coefficient $\int_{0}^{1} f(x+iy) dx = 0$ for $y$ sufficiently large. Zhao~\cite{Zhao2009,Zhao2010} addressed this problem via the Kuznetsov formula, thereby extending the prior result of Luo and Sarnak~\cite[Theorem~1]{LuoSarnak2004} in the holomorphic case. In the nonholomorphic case, the subsequent work of Sarnak and Zhao~\cite[Proposition~7]{SarnakZhao2019} achieves the stronger result
\begin{equation*}
\lim_{T \to \infty} \frac{1}{T} \sum_{t_{\psi} \leq T} |\langle \varphi, |\psi|^{2} \rangle_{2}|^{2} = C_{2}^{\star}(\varphi) L \left(\frac{1}{2}, \varphi \right) V_{2}(\varphi), \qquad C_{2}^{\star}(\varphi) \coloneqq \frac{1}{\zeta(2)} \prod_{p} \left(1 -\frac{\lambda_{\varphi}(p)}{p^{\frac{3}{2}}+p^{\frac{1}{2}}} \right),
\end{equation*}
where $\lambda_{\varphi}(p)$ denotes the $p$-th Hecke eigenvalue of $\varphi$. Among other notable applications, the nonnegativity of the central $L$-values $L(\frac{1}{2}, \varphi) \geq 0$ follows as a straightforward consequence.

Furthermore, the quantum variance for Hecke--Maa{\ss} cusp forms $\varphi$ is in principle consistent with a prediction by Feingold and Peres~\cite{EckhardtFishmanKeatingAgamMainMuller1995,FeingoldPeres1986} on the quantum variance for generic chaotic systems. In particular, it recovers the classical variance due to Ratner~\cite{Ratner1973,Ratner1987}, up to multiplication by $L(\frac{1}{2}, \varphi)$ for the discrete spectrum and by $L(\frac{1}{2}, \varphi)^{2}$ for the continuous spectrum, with the difference arising from the distinct shape of the Watson--Ichino formul{\ae}.

Utilising the theta correspondence, Nelson~\cite{Nelson2016,Nelson2017,Nelson2019-2} investigated the quantum variance for compact arithmetic quotients, thereby reducing the problem to the estimation of metaplectic Rankin--Selberg convolutions. The cofinite case differs from the cocompact case, since the latter has no cusps and hence lacks an adequate Hecke theory for Fourier coefficients of Hecke--Maa{\ss} cusp forms. It can be summarised that that the analysis of quantum variance for arithmetic orbifolds sheds some light on QUE and related subjects, wherein an additional layer of arithmetical information -- the Kuznetsov and Watson--Ichino formul{\ae} -- enters. For further advancement concerning quantum variance from a number-theoretic~perspective, the reader is directed to~\cite{ChatzakosFrotRaulf2021,HuangLester2023,SunZhang2024,Zenz2021,Zenz2022}.

\subsection{Statement of results}
The approach of Zhang~\cite{Zhang2017,Zhang2019} reduces the analysis to determining the size of the inner products $\mu_{n, t}(g)$ in~\eqref{eq:inner-product} for a Hecke--Maa{\ss} cusp form~$g = \varphi$ and for a parabolic Eisenstein series $g = E_{(n_{1}, n_{2}, \ldots, n_{r})}$ of type $(n_{1}, n_{2}, \ldots, n_{r})$ associated to~the parabolic subgroup $\mathrm{P}_{(n_{1}, n_{2}, \ldots, n_{r})}$ with $n_{1}+n_{2}+\cdots+n_{r} = n$. For $n \geq 3$, crucial distinctions arise compared to the case of $n = 2$, notably the fact that $\mu_{n, t}$ vanishes identically on Hecke--Maa{\ss} cusp forms and the majority of Eisenstein series; see~\cite[Section~4]{Zhang2019}. Consequently, the analysis of QUE is centered on~\eqref{eq:inner-product} in a certain subfamily of Eisenstein series, identified therein as including only that of type $(2, 1, \ldots, 1)$ and the minimal parabolic Eisenstein series of type $(1, \ldots, 1)$. Given the absence of square-integrability of Langlands Eisenstein series on the underlying $n$-dimensional orbifold $\mathbb{X}_{n}$, the concept of incomplete Eisenstein series comes into play.

In harmony with the case of $n = 2$, the contribution of the minimal parabolic Eisenstein series of type $(1, \ldots, 1)$ constitutes the main term as in~\eqref{eq:Zhang}, whereas the contribution of the incomplete Eisenstein series of type $(2, 1, \ldots, 1)$ remains small. Let $E_{(2, 1, \ldots, 1)}(z, \eta, \varphi)$ denote such an incomplete Eisenstein series induced from a $\mathrm{SL}_{2}(\mathbb{Z})$ Hecke--Maa{\ss} cusp form $\varphi$, where $\eta$ is a fixed smooth compactly supported function. Then Zhang~\cite[Theorem~4.1]{Zhang2019} proved
\begin{equation*}
\mu_{n, t}(E_{(2, 1, \ldots, 1)}(\cdot, \eta, \varphi)) \ll_{n, \varphi, \epsilon} (1+|t|)^{-\frac{1}{2}+\epsilon}.
\end{equation*}
In principle, his machinery yields the exponent $-\frac{n}{2}+1+\epsilon$, whereas the generalised Lindel\"{o}f hypothesis for $L(s, \varphi)$ implies an improved upper bound
\begin{equation}\label{eq:GLH}
\mu_{n, t}(E_{(2, 1, \ldots, 1)}(\cdot, \eta, \varphi)) \ll_{n, \varphi, \epsilon} (1+|t|)^{-\frac{n-1}{2}+\epsilon}.
\end{equation}
Our first main result gives a mean value estimate for the inner product against the incomplete Eisenstein series of type $(2, 1, \ldots, 1)$. Contrary to heuristic expectations, it not only supports the conjecture~\eqref{eq:GLH}, but also achieves a polynomial power-saving improvement beyond the pointwise implications of the generalised Lindel\"{o}f hypothesis, indicating among other aspects substantial fluctuations of the inner product within a dyadic interval $t \in [T, 2T]$.
\begin{theorem}\label{thm:main}
Keep the notation as above. Then we have for any $n \geq 3$ that
\begin{equation*}
 \frac{1}{T} \int_{T}^{2T} \mu_{n, t}(E_{(2, 1, \ldots, 1)}(\cdot, \eta, \varphi)) dt \ll_{n, \varphi} T^{-\frac{n-1}{2}-\frac{7(n-2)}{6(5n-2)}} \sqrt{\log T}.
\end{equation*}
\end{theorem}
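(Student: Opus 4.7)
The plan is to combine the Watson--Ichino-type formula for incomplete Eisenstein series of type $(2,1,\ldots,1)$, advertised in the abstract, with Jutila's (1996) asymptotic formula for the second moment of $L(1/2+it,\varphi)$ in long intervals.

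I would first apply Mellin inversion to the smooth cutoff defining $E_{(2,1,\ldots,1)}(z, \eta, \varphi)$, decomposing it as a multivariate contour integral of genuine Langlands Eisenstein series $E_{(2,1,\ldots,1)}(z, \vec{s}, \varphi)$ weighted by the Mellin transform $\widetilde\eta(\vec{s})$. This turns $\mu_{n,t}(E_{(2,1,\ldots,1)}(\cdot, \eta, \varphi))$ into a contour integral of triple products against a rapidly decaying weight. Invoking the Watson--Ichino-type formula then factors each triple product
$$\left\langle \left|E_{(n-1,1)}\bigl(\cdot, \tfrac{1}{2}+it, \mathbbm{1}\bigr)\right|^{2}, \, E_{(2,1,\ldots,1)}(\cdot, \vec{s}, \varphi) \right\rangle_{n}$$
as the product of an archimedean local factor with a ratio of completed $L$-functions, whose $\varphi$-dependent content involves the Hecke $L$-function $L(\cdot,\varphi)$ sampled at shifts linear in $\vec{s}$ and $it$.

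Next, I would apply Stirling's approximation to the archimedean factor. Pushing contours onto $\Re(\vec{s}) = 1/2$, this weight is sharply concentrated, effectively restricting the evaluation of $L(1/2+i\tau, \varphi)$ to a concrete window whose length is governed by $n$ and $t$. Swapping orders of integration, executing the $t$-average over $[T, 2T]$, and applying Cauchy--Schwarz in the remaining $\vec{s}$-variables reduces matters to the second moment
$$\int_{I} \left|L\bigl(\tfrac{1}{2}+i\tau, \varphi\bigr)\right|^{2} d\tau$$
over a subinterval $I$ of length $H = H(T)$ inside $[-T^{1+o(1)}, T^{1+o(1)}]$. Jutila's long-interval estimate then supplies the asymptotic majorant $\ll H \log T$, provided $H$ exceeds Jutila's admissibility threshold.

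Finally, I would optimize the free parameter $H$: shrinking $H$ strengthens the bound from the second moment but enlarges the loss from Cauchy--Schwarz, while enlarging $H$ reverses the balance. Combined with the polynomial scaling of the archimedean factor in $T$ (governed by $n$), this balancing should produce the claimed power-saving $\frac{7(n-2)}{6(5n-2)}$ beyond the Lindel\"of exponent $\frac{n-1}{2}$, the factor $\sqrt{\log T}$ emerging from the square root of Jutila's main term. The principal technical hurdle, I anticipate, is the precise stationary-phase analysis of the archimedean weight arising from the triple product of a highly oscillatory squared degenerate maximal parabolic Eisenstein series against a non-maximal Eisenstein series induced from $\varphi$: without a sharp localization one would be forced to take $H \asymp T$ in Jutila, recovering only the pointwise Lindel\"of exponent and forfeiting the polynomial gain that is the substance of the theorem.
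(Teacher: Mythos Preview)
Your outline has a genuine gap: the route through Cauchy--Schwarz and Jutila's second moment cannot by itself go beyond the Lindel\"of exponent $\tfrac{n-1}{2}$. First, the Watson--Ichino-type formula here is a \emph{closed identity}
\[
\mu_{n,t}\bigl(E_{(2,1,\ldots,1)}(\cdot,\eta,\varphi)\bigr)=\frac{a_{n}\rho_{\varphi}(1)}{4\pi^{-int}}\,\frac{\Lambda(\tfrac12-int,\varphi)\,\Lambda(\tfrac{3-n}{2},\varphi)}{|\Lambda(\tfrac{n}{2}+int)|^{2}},
\]
with the constant $a_{n}$ absorbing $\widetilde\eta$ at fixed arguments; no residual $\vec{s}$-contour survives, so there is no $\vec{s}$-localisation to exploit. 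After Stirling the integrand equals, up to bounded factors, $e^{-int\log(nt/2e\pi)}L(\tfrac12-int,\varphi)\,t^{-(n-1)/2}$, and the $L$-value sweeps an interval of length $\asymp nT$ as $t$ ranges over $[T,2T]$. Applying Cauchy--Schwarz at this stage discards the phase, and since Jutila's formula is an \emph{asymptotic} (not an upper bound with slack), one recovers exactly $T^{-(n-1)/2}\sqrt{\log T}$. There is no free window $H$ to shrink.

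The saving $\tfrac{7(n-2)}{6(5n-2)}$ comes instead from treating the problem as an oscillatory \emph{first} moment. One expands $|\zeta(\tfrac{n}{2}+int)|^{-2}$ as a short Dirichlet polynomial (legitimate since $\tfrac{n}{2}>1$ for $n\ge 3$), opens $L(\tfrac12-int,\varphi)$ via the approximate functional equation, smooths the $t$-range with a cutoff of transition width $U$, and then evaluates the $t$-integral by stationary phase. The stationary point transforms the Dirichlet sum into an additively twisted sum $\sum_{m}\lambda_{\varphi}(m)e(\ell m/k)m^{-s}w(\cdot)$, and it is the convexity bound for the cuspidal Estermann series $L(s,\varphi;\ell/k)$ that produces the gain; Jutila enters only to control the smoothing and Stirling error terms. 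The optimisation is over $U$ (and the zeta-truncation length), balancing the smoothing loss $U\sqrt{\log T}$, Jutila's $T^{2/3}$ error, and the stationary-phase output $T^{-1/2}U^{-2}$. Your instinct that stationary phase is central is correct, but it is the \emph{engine} of the power saving, not a preliminary localisation feeding a second-moment bound.
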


Theorem~\ref{thm:main} exhibits an additional polynomial power-saving that obeys
\begin{equation*}
\frac{7}{78} = 0.08974 \cdots \leq \frac{7(n-2)}{6(5n-2)} < \frac{7}{30} = 0.23333 \cdots.
\end{equation*}
Given $n \geq 3$ and any two Hecke--Maa{\ss} cusp forms $\varphi$ and $\psi$, we define the quantum variance in higher rank by
\begin{equation*}
Q_{n}(\varphi, \psi) \coloneqq \lim_{T \to \infty} \frac{T^{n-2}}{\log T} \int_{T}^{2T} \mu_{n, t}(E_{2, 1, \ldots, 1}(\cdot, \eta, \varphi)) \overline{\mu_{n, t}(E_{2, 1, \ldots, 1}(\cdot, \eta, \psi))} dt,
\end{equation*}
which extends~\eqref{eq:Q-2} for $n = 2$. The second main result of the present paper is the following.
\begin{theorem}\label{thm:quantum-variance}
Keep the notation as above. Then we have for any $n \geq 3$ that
\begin{equation*}
Q_{n}(\varphi, \psi) = 
\begin{dcases}
C_{n}(\varphi) L \left(\frac{3-n}{2}, \varphi \right)^{2} V_{n}(\varphi) & \text{if $\varphi = \psi$ is even},\\
0 & \text{otherwise},
\end{dcases}
\end{equation*}
where $C_{n}(\varphi)$ is given by a product of local densities~\eqref{eq:C-n}, and
\begin{equation}\label{eq:V}
V_{n}(\varphi) \coloneqq \frac{|\Gamma(\frac{3-n}{4}+\frac{it_{\varphi}}{2})|^{4}}{2\pi^{3-n} |\Gamma(\frac{1}{2}+it_{\varphi})|^{2}}.
\end{equation}
\end{theorem}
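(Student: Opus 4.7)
The strategy is to adapt Huang's~\cite{Huang2021-3} $n = 2$ quantum variance blueprint to higher rank by combining the Watson--Ichino-type formula for incomplete Eisenstein series of type $(2, 1, \ldots, 1)$ with Jutila's (1996) asymptotic for the long-interval second moment of $L(s, \varphi)$. I would first unfold $\mu_{n, t}(E_{(2, 1, \ldots, 1)}(\cdot, \eta, \varphi))$ against the defining sum of the incomplete Eisenstein series, reducing the $n$-dimensional period to a $(2, 1, \ldots, 1)$-type Watson--Ichino period on $\mathbb{X}_{2}$. Substituting the Langlands parameters of $E_{(n-1, 1)}(\cdot, \tfrac{1}{2}+it, \mathbbm{1})$ and unwinding the resulting $\mathrm{GL}_{n}$ Rankin--Selberg convolution with $\varphi$ then expresses the period as a product of shifted $L$-values of $\varphi$, weighted by a quotient $A_{n}(t, t_{\varphi})$ of archimedean $\Gamma$-factors; the shift to the point $\tfrac{3-n}{2}$ arises naturally from the degenerate inducing data, and parity of the underlying $\mathrm{SL}_{2}(\mathbb{Z})$-invariant expansion forces the identity to vanish for odd $\varphi$.

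Substituting this expression into the definition of $Q_{n}(\varphi, \psi)$, I would dispatch the off-diagonal case $\varphi \neq \psi$ by approximate orthogonality of Hecke eigenvalues: opening the remaining $L$-factors via approximate functional equations, the absence of a pole of the Rankin--Selberg convolution $L(s, \varphi \otimes \bar\psi)$ forces the $t$-average of the cross term to be $o(T^{2-n} \log T)$, so that $Q_{n}(\varphi, \psi) = 0$. For $\varphi = \psi$ even, the factor $L(\tfrac{3-n}{2}, \varphi)^{2}$ separates as a $t$-independent multiplier and the computation reduces to a weighted second moment
$$\frac{T^{n-2}}{\log T} \int_{T}^{2T} |A_{n}(t, t_{\varphi})|^{2} |\widehat\eta(t)|^{2} \left|L \left(\tfrac{1}{2}+it, \varphi \right)\right|^{2} dt,$$
in which the archimedean amplitude $|A_{n}(t, t_{\varphi})|^{2}$ decays polynomially like $T^{-(n-1)}$ on $t \asymp T$ by Stirling, thereby compensating the $T^{n-2}$ normalization and, after extraction of its $t_{\varphi}$-dependence, producing the archimedean weight $V_{n}(\varphi)$ from~\eqref{eq:V}.

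The final step is to evaluate the reduced second moment via Jutila's (1996) asymptotic
$$\int_{T}^{2T} \left|L \left(\tfrac{1}{2}+it, \varphi \right)\right|^{2} dt \sim c_{\varphi} T \log T,$$
where the local arithmetic factors in $c_{\varphi}$ reassemble into the Euler product $C_{n}(\varphi)$ from~\eqref{eq:C-n}, yielding $Q_{n}(\varphi, \varphi) = C_{n}(\varphi) L(\tfrac{3-n}{2}, \varphi)^{2} V_{n}(\varphi)$ as claimed. The principal technical obstacle will be the archimedean bookkeeping: carefully tracking the $\Gamma$-factor quotient $A_{n}(t, t_{\varphi})$ through Stirling's approximation to identify the correct $V_{n}(\varphi)$ prefactor in~\eqref{eq:V}, and separately verifying that the Dirichlet series over Hecke eigenvalues of $\varphi$ produced by Jutila's main term aligns with the Watson--Ichino normalization to deliver the Euler product $C_{n}(\varphi)$. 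A secondary difficulty is justifying uniformity of the approximations to $|A_{n}(t, t_{\varphi})|^{2}$ and $|\widehat\eta(t)|^{2}$ in the dyadic window $t \in [T, 2T]$, which should follow by standard integration by parts once the $\Gamma$-factor oscillations are properly controlled.
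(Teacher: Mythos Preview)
Your overall architecture is right (Watson--Ichino-type identity, Stirling for the archimedean weight, second moment input), but there is a genuine gap in the arithmetic step. The formula~\eqref{eq:Watson-Ichino} that results from the unfolding reads
\[
\mu_{n,t}(E_{(2,1,\ldots,1)}(\cdot,\eta,\varphi)) \;=\; \frac{a_{n}\,\rho_{\varphi}(1)}{4\pi^{-int}}\,\frac{\Lambda(\tfrac{1}{2}-int,\varphi)\,\Lambda(\tfrac{3-n}{2},\varphi)}{|\Lambda(\tfrac{n}{2}+int)|^{2}},
\]
so the integrand of $Q_{n}(\varphi,\psi)$ carries, besides the $\Gamma$-quotient you call $A_{n}(t,t_{\varphi})$, the \emph{finite-place} factor $|\zeta(\tfrac{n}{2}+int)|^{-4}$. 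You have absorbed this into ``a quotient of archimedean $\Gamma$-factors'' and then proposed to quote Jutila's asymptotic for $\int_{T}^{2T}|L(\tfrac12+it,\varphi)|^{2}\,dt$ directly. That does not work: $|\zeta(\tfrac{n}{2}+int)|^{-4}$ is bounded above and below for $n\ge 3$, so it does not affect orders of magnitude, but it is an oscillating Dirichlet series, and it is exactly its interaction with the Hecke eigenvalues that produces the Euler product $H_{\varphi}$ inside $C_{n}(\varphi)$ in~\eqref{eq:C-n}. Jutila's constant $c_{\varphi}$ alone carries no $n$-dependence and cannot ``reassemble'' into $C_{n}(\varphi)$.

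The paper treats this by following Huang's procedure \textit{mutatis mutandis}: one opens both $L$-values by approximate functional equations, expands $|\zeta(\tfrac{n}{2}+int)|^{-4}$ as an absolutely convergent Dirichlet series, and splits the $t$-integral into diagonal terms $\mathcal{D}_{\varphi,\psi}+\mathcal{D}_{\psi,\varphi}$ (evaluated via the Dirichlet series $H_{\varphi,\psi}(s,s_{1},s_{2})$ of~\cite[Equation~(5.3)]{Huang2021-3}) and off-diagonal terms $\mathcal{O}_{\varphi,\psi}+\mathcal{O}_{\psi,\varphi}$ (bounded via the $\delta$-symbol method). Jutila's theorem is used in the paper only for the \emph{weighted} variance of Proposition~\ref{prop:quantum-variance}, where one first multiplies by $|\zeta(\tfrac{n}{2}+int)|^{4}$ precisely to kill this factor. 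Two smaller corrections: the $\eta$-dependence enters as the constant $a_{n}$ of~\eqref{eq:a-n}, not as a $t$-varying $|\widehat{\eta}(t)|^{2}$; and the $L$-function argument is $\tfrac12\pm int$, not $\tfrac12\pm it$, which after the change of variables is the source of the $\log n$ in $C_{n}(\varphi)$.
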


Our results are commensurate with prior investigations for $n = 2$, where a $\mathrm{SL}_{2}(\mathbb{Z})$ Hecke--Maa{\ss} cusp form $\varphi$ is replaced by the Eisenstein series of type $(2, 1, \ldots, 1)$ induced from $\varphi$, thereby establishing the folklore conjecture~\eqref{eq:GLH} on average for any $n \geq 3$.

\begin{remark}
A salient difference between the results of Luo and Sarnak~\cite[Theorem~1.1]{LuoSarnak1995} and Zhang~\cite[Theorem~1.3]{Zhang2019} lies in the fact that the former asserts for $n = 2$ that
\begin{equation*}
\mu_{2, t}(f)-\frac{2\log t}{\Lambda(2)} \langle f, 1 \rangle_{2} \sim \Re \left(\frac{\zeta^{\prime}}{\zeta}(1+2it) \right) \neq O(1)
\end{equation*}
for $f \in \mathcal{C}_{c}^{\infty}(\mathbb{X}_{2})$, with the error term coming from the continuous spectrum, whereas the latter asserts for $n \geq 3$ that the contribution of the incomplete Eisenstein series is bounded,~namely
\begin{equation}\label{eq:bounded}
\mu_{n, t}(f)-\frac{2\log t}{\Lambda(n)} \langle f, 1 \rangle_{n} = O(1)
\end{equation}
for $f \in \mathcal{C}_{c}^{\infty}(\mathbb{X}_{n})$. It would be a formidable task to derive an elegant expression for the second moment of the discrepancy on the left-hand side of~\eqref{eq:bounded} for $f$ within a certain restricted~space of smooth compactly supported functions. While it is likely feasible given the state-of-the-art techniques, we shall defer addressing this question to subsequent investigations.
\end{remark}

\begin{remark}
Quantum ergodicity in higher rank was studied by Brumley and Matz~\cite[Theorem~1.1]{BrumleyMatz2023} in the level aspect, interpreted through the Benjamini--Schramm limit, thereby extending the foundational work of Le Masson and Sahlsten~\cite[Theorem~1.1]{LeMassonSahlsten2017} and~Abert, Bergeron, and Le Masson~\cite[Theorem~5]{AbertBergeronLeMasson2024} in the rank $1$ setting.
\end{remark}

\subsection{Proof strategies}
We succinctly outline aspects of our methods. A crucial ingredient of the proof of Theorem~\ref{thm:main} is an approximation argument for smooth compactly~supported functions on $\mathbb{H}^{n}$ (see~\cite[Remark~2.2]{Zhang2019}), which is in harmony with~\cite[Proposition~2.3]{LuoSarnak1995}.

A second key observation pertains to the absence of the Watson--Ichino formula in higher rank. As a substitute, we utilise the vanishing of the measure $\mu_{n, t}$ on Hecke--Maa{\ss} cusp~forms for $n \geq 3$ and on Eisenstein series of types other than $(2, 1, \ldots, 1)$. This property is specific to the higher rank context and lies in the spirit of Langlands's theory of constant terms~\cite{Langlands1971}. Then a subtle rearrangement of the computation of Zhang~\cite[Proposition~4.1]{Zhang2019} serves as a proper alternative to the Watson--Ichino formula, thereby relating the problem to a certain weighted integral moment of $\mathrm{GL}_{2}$ $L$-functions. To establish the polynomial power-saving for the inner product in question, we make use of the Cauchy--Schwarz inequality, approximate functional equations, and the stationary phase analysis, followed by delicate manipulations of the optimisation problem that emerge from these combined techniques.

The proof of Theorem~\ref{thm:quantum-variance} relies similarly on the Watson--Ichino-type formula for parabolic Eisenstein series in higher rank, thereby leading to the problem of determining the behaviour of the second moment of completed $\mathrm{GL}_{2}$ $L$-functions of the shape
\begin{equation}\label{eq:shifted-second-moments}
\frac{1}{T} \int_{T}^{2T} \Lambda \left(\frac{1}{2}+it, \varphi \right) \Lambda \left(\frac{1}{2}-it, \psi \right) dt,
\end{equation}
where $\varphi \neq \psi$. This problem has not garnered adequate attention. In particular, the analysis of~\eqref{eq:shifted-second-moments} requires a meticulous treatment of $L(s, \varphi)$ and some oscillatory factors arising from the archimedean component $L_{\infty}(s, \varphi)$. This is where the best known second moment estimate of Jutila~\cite[Theorem~3]{Jutila1996} -- a nonholomorphic counterpart of the result of Good~\cite[Theorem]{Good1982} -- comes into play. Note that Huang~\cite[Page~1233]{Huang2021-3} employed a suboptimal asymptotic formula of Kuznetsov~\cite[Theorem~1]{Kuznetsov1981}, highlighting the main term; however, the error term dictates the extent of the polynomial power-saving in Theorem~\ref{thm:main} when~$n \geq 3$.

\section{Automorphic preliminaries}
This section compiles basic ingredients from the spectral theory of automorphic forms on $\mathbb{X}_{n} = \mathrm{Z}(\mathbb{R}) \, \mathrm{SL}_{n}({\mathbb{Z}}) \backslash \mathrm{GL}_{n}({\mathbb{R}}) / \mathrm{O}_{n}({\mathbb{R}})$. For further details, the reader is directed to the book of Goldfeld~\cite{Goldfeld2006}.

\subsection{Iwasawa decomposition}
Recall that the generalised upper half-plane $\mathbb{H}^{n}$ is defined as the quotient $\mathrm{GL}_{n}(\mathbb{R})/(\mathrm{O}_{n}(\mathbb{R}) \cdot \mathrm{Z}(\mathbb{R}))$ and may be identified~\cite[Definition~1.2.3]{Goldfeld2006} with the set of all $n \times n$ matrices of the form $z = x \cdot y$, where
\begin{equation}\label{eq:Iwasawa}
x = \begin{pmatrix} 1 & x_{1, 2} & x_{1, 3} & \cdots & x_{1, n-1} & x_{1, n} \\ & 1 & x_{2, 3} & \cdots & x_{2, n-1} & x_{2, n}\\ & & \ddots & & & \vdots \\ & & & & 1 & x_{n-1, n}\\ & & & & & 1 \end{pmatrix}, \qquad 
y = \begin{pmatrix} y_{1} \cdots y_{n-1} & & & & \\ & y_{1} \cdots y_{n-2} & & & \\ & & \ddots & & \\ & & & y_{1} & \\ & & & & 1 \end{pmatrix}
\end{equation}
with $x_{i, j} \in \mathbb{R}$ and $y_{i} > 0$, known as the Iwasawa decomposition for $\mathrm{GL}_{n}(\mathbb{R})$. The generalised upper half-plane $\mathbb{H}^{n}$ is $\frac{(n+2)(n-1)}{2}$-dimensional over $\mathbb{R}$ and does not necessarily have a complex structure. Furthermore, the associated Haar measure is specified by
\begin{equation*}
d\mu_{n}(z) \coloneqq \prod_{1 \leq i < j \leq n} x_{i, j} \prod_{k = 1}^{n-1} y_{k}^{-k(n-k)} \frac{dy_{k}}{y_{k}}.
\end{equation*}
We adopt the standard convention of Goldfeld~\cite[Theorem~1.6.1]{Goldfeld2006}, namely
\begin{equation*}
\vol(\mathbb{X}_{n}) = n \prod_{k = 2}^{n} \Lambda(k),
\end{equation*}
whereas Zhang~\cite[Page~5]{Zhang2019} normalises the Haar measure such that $\vol(\mathbb{X}_{n}) = 1$.

\subsection{Hecke--Maa{\ss} cusp forms}
We define Hecke--Maa{\ss} cusp forms attached to $\mathrm{SL}_{n}(\mathbb{Z})$ in anticipation of their pivotal role in the subsequent formulation of parabolic Eisenstein series. If $\mathfrak{D}_{n}$ denotes the centre of the universal enveloping algebra $\mathfrak{gl}_{n}(\mathbb{R})$, then there exists a family of differential operators (the Casimir operators) $\Delta_{m, n}$ parametrised by $n \neq 2$ and $2 \leq m \leq n$, which generate $\mathfrak{D}_{n}$ as a polynomial algebra of rank $n-1$; see~\cite[Section~2.3]{Goldfeld2006}.

Given $\nu = (\nu_{1}, \ldots, \nu_{n-1}) \in \mathbb{C}^{n-1}$, we define
\begin{equation*}
I_{\nu}(z) \coloneqq \prod_{i = 1}^{n-1} \prod_{j = 1}^{n-1} y_{i}^{b_{i, j} \nu_{j}}, \qquad b_{i, j} \coloneqq 
\begin{cases}
ij & \text{if $i+j \leq n$},\\
(n-i)(n-j) & \text{otherwise},
\end{cases}
\end{equation*}
which are eigenfunctions of all the Casimir operators $\Delta_{m, n}$ with eigenvalues $\lambda_{m, n}$, namely
\begin{equation*}
\Delta_{m, n} I_{\nu}(z) = \lambda_{m, n} I_{\nu}(z).
\end{equation*}
An automorphic form $\varphi(z)$ of type $\nu$ in $L^{2}(\mathbb{X}_{n})$ is a smooth function satisfying the invariance $\varphi(\gamma z) = \varphi(z)$ for $\gamma \in \mathrm{SL}_{n}(\mathbb{Z})$ and the condition
\begin{equation*}
\Delta_{m, n} \varphi(z) = \lambda_{m, n} \varphi(z).
\end{equation*}
Let $\sum_{i = 1}^{m} r_{i} = n$. If the additional cuspidality condition
\begin{equation*}
\int_{(\mathrm{SL}_{n}(\mathbb{Z}) \cap \mathrm{U}) \backslash \mathrm{U}} \varphi(uz) du = 0
\end{equation*}
holds for all matrix groups of the shape
\begin{equation*}
\mathrm{U} \coloneqq \left\{\begin{pmatrix} I_{r_{1}} & \ast & \cdots & \ast \\ 0 & I_{r_{2}} & \cdots & \ast \\ \vdots & \vdots & \ddots & \vdots \\ 0 & 0 & \cdots & I_{r_m} \end{pmatrix} \right\} \subset \mathrm{GL}_{n}(\mathbb{R}),
\end{equation*}
then $\varphi$ is called a Maa{\ss} cusp form. We assume without loss of generality that all Maa{\ss} cusp forms are joint eigenfunctions of every Hecke operator, unless otherwise specified; see~\cite[Section~9.3]{Goldfeld2006} for~the Hecke theory in higher rank.

\subsection{Parabolic subgroups}
There exist various versions of Eisenstein series of higher rank, contingent upon the parabolic subgroups of $\mathrm{GL}_{n}(\mathbb{R})$. The parabolic subgroup $\mathrm{P}_{(n_{1}, n_{2}, \ldots, n_{r})}(\mathbb{R})$ associated to the partition $n_{1}+n_{2}+\cdots+n_{r}=n$ is defined by
\begin{equation*}
\mathrm{P}_{(n_{1}, n_{2}, \ldots, n_{r})}(\mathbb{R}) = \left\{\begin{pmatrix} \mathfrak{m}_{n_{1}} & \ast & \cdots & \ast \\ 0 & \mathfrak{m}_{n_{2}} & \cdots & \ast \\ \vdots & \vdots & \ddots & \vdots \\ 0 & 0 & \cdots & \mathfrak{m}_{n_{r}} \end{pmatrix} \right\} \subset \mathrm{GL}_{n}(\mathbb{R}),
\end{equation*}
where $\mathfrak{m}_{n_{i}} \in \mathrm{GL}_{n_{i}}(\mathbb{R})$. Two distinct parabolic subgroups defined by the partitions $(n_{1}, \ldots, n_{r})$ and $(n_{1}^{\prime}, \ldots, n_{r}^{\prime})$, respectively, are termed associate if the set $\left\{n_{1}, \ldots, n_{r} \right\}$ forms a permutation of $\{n_{1}^{\prime}, \ldots, n_{r}^{\prime} \}$. The parabolic subgroup admits a Langlands decomposition
\begin{equation*}
\mathrm{P}_{(n_{1}, n_{2}, \ldots, n_{r})}(\mathbb{R}) = \mathrm{N}_{(n_{1}, n_{2}, \ldots, n_{r})}(\mathbb{R}) \cdot \mathrm{M}_{(n_{1}, n_{2}, \ldots, n_{r})}(\mathbb{R}),
\end{equation*}
where 
\begin{equation*}
\mathrm{N}_{(n_{1}, n_{2}, \ldots, n_{r})}(\mathbb{R}) \coloneqq \left\{\begin{pmatrix} I_{n_{1}} & \ast & \cdots & \ast \\ 0 & I_{n_{2}} & \cdots & \ast \\ \vdots & \vdots & \ddots & \vdots \\ 0 & 0 & \cdots & I_{n_{r}} \end{pmatrix} \right\} \subset \mathrm{GL}_{n}(\mathbb{R})
\end{equation*}
denotes the unipotent radical, and
\begin{equation*}
\mathrm{M}_{(n_{1}, n_{2}, \ldots, n_{r})}(\mathbb{R}) \coloneqq \left\{\begin{pmatrix} \mathfrak{m}_{n_{1}} & 0 & \cdots & 0 \\ 0 & \mathfrak{m}_{n_{2}} & \cdots & 0 \\ \vdots & \vdots & \ddots & \vdots \\ 0 & 0 & \cdots & \mathfrak{m}_{n_{r}} \end{pmatrix} \right\} \subset \mathrm{GL}_{n}(\mathbb{R})
\end{equation*}
denotes the Levi component, where $\mathfrak{m}_{n_{i}} \in \mathrm{GL}_{n_{i}}(\mathbb{R})$. If we take an element $\gamma \in \mathrm{P}_{(n_{1}, \ldots, n_{r})}(\mathbb{R})$ with
\begin{equation*}
\gamma = \begin{pmatrix} I_{n_{1}} & \ast & \cdots & \ast \\ 0 & I_{n_{2}} & \cdots & \ast \\ \vdots & \vdots & \ddots & \vdots \\ 0 & 0 & \cdots & I_{n_{r}} \end{pmatrix} \cdot \begin{pmatrix} \mathfrak{m}_{n_{1}}(\gamma) & 0 & \cdots & 0 \\ 0 & \mathfrak{m}_{n_{2}}(\gamma) & \cdots & 0 \\ \vdots & \vdots & \ddots & \vdots \\ 0 & 0 & \cdots & \mathfrak{m}_{n_{r}}(\gamma) \end{pmatrix},
\end{equation*}
then the Langlands decomposition defines the projection map $\pi_{n_{i}}: \mathrm{P}_{(n_{1}, \ldots, n_{r})}(\mathbb{R}) \to \mathrm{GL}_{n_{i}}(\mathbb{R})$, namely
\begin{equation*}
\gamma \to \mathfrak{m}_{n_{i}}(\gamma).
\end{equation*}

\subsection{Eisenstein series}
In the ensuing discussion, for any real Lie group $\mathrm{G}$ of $n \times n$ matrices, we define $\mathrm{G}(\mathbb{Z}) \coloneqq \mathrm{G}(\mathbb{R}) \cap \mathrm{GL}_{n}(\mathbb{Z})$. The spectral decomposition on $\mathbb{X}_{n}$ reads
\begin{equation}\label{eq:spectral-decomposition}
L^{2}(\mathbb{X}_{n}) = L_{\mathrm{cusp}}^{2}(\mathbb{X}_{n}) \oplus \bigoplus_{(n_{1}, \ldots, n_{r})} L^{2}_{(n_{1}, \ldots, n_{r})}(\mathbb{X}_{n}),
\end{equation}
where the decomposition of the continuous spectrum is indexed by representatives of classes of associate partitions. The cuspidal and continuous components are respectively spanned~by $\mathrm{GL}_{n}$ Hecke--Maa{\ss} cusp forms and Langlands parabolic Eisenstein series.

For each $z \in \mathbb{H}^{n}$ as in~\eqref{eq:Iwasawa} and for every $s = (s_{1}, \ldots, s_{r}) \in \mathbb{C}^{r}$ satisfying the condition
\begin{equation}\label{eq:condition}
\sum_{i = 1}^{r} n_{i} s_{i} = 0,
\end{equation}
we define
\begin{equation*}
I_{s}(z, \mathrm{P}_{(n_{1}, \ldots, n_{r})}) \coloneqq \left(\prod_{j_{1} = n-n_{1}+1}^{n} Y_{j_{1}} \right)^{s_{1}} \left(\prod_{j_{2} = n-n_{1}-n_{2}+1}^{n-n_{1}} Y_{j_{2}} \right)^{s_{2}} \cdots \left(\prod_{j_{r} = 1}^{n_{r}} Y_{j_{r}} \right)^{s_{r}},
\end{equation*}
where $Y_{i} \coloneqq y_{1} y_{2} \cdots y_{i-1}$. 
\begin{definition}
Let $\varphi_{i}$ be a collection of Hecke--Maa{\ss} cusp forms on $\mathrm{GL}_{n_{i}}(\mathbb{R})$ for $1 \leq i \leq r$. Let $s = (s_{1}, \ldots, s_{r})$ satisfy~\eqref{eq:condition}. Then we define the parabolic Eisenstein series by
\begin{equation*}
E_{(n_{1}, \ldots, n_{r})}(z, s, \varphi_{1}, \ldots, \varphi_{r}) \coloneqq \sum_{\gamma \in \mathrm{P}_{(n_{1}, \ldots, n_{r})}(\mathbb{Z}) \backslash \mathrm{SL}_{n}(\mathbb{Z})} \prod_{i = 1}^{r} \varphi_{i}(\mathfrak{m}_{n_{i}}(\gamma z)) I_{s}(\gamma z, \mathrm{P}_{(n_{1}, \ldots, n_{r})}).
\end{equation*}
\end{definition}

From the linear relation~\eqref{eq:condition}, one may eliminate $s_{r}$; hence, $E_{(n_{1}, \ldots, n_{r})}(z, s, \varphi_{1}, \ldots, \varphi_{r})$ may be thought of as a function of $z$ and $s = (s_{1}, \ldots, s_{r-1}) \in \mathbb{C}^{r-1}$. This convention is employed throughout the paper. As an emblematic example, the minimal parabolic Eisenstein series arises from the partition $n = 1+\cdots+1$ and is defined by
\begin{equation}\label{eq:minimal-parabolic}
E_{(1, \ldots, 1)}(z, s) \coloneqq \sum_{\gamma \in \mathrm{P}_{(1, \ldots, 1)}(\mathbb{Z}) \backslash \mathrm{SL}_{n}(\mathbb{Z})} I_{s} (\gamma z).
\end{equation}
Given the constant eigenfunction $\varphi_{0} = \mathbbm{1} \in \mathrm{SL}_{n-1}(\mathbb{Z})$, the partition $n = (n-1)+1$ defines the totally degenerate maximal parabolic Eisenstein series
\begin{equation*}
E_{(n-1, 1)}(z, s, \mathbbm{1}) \coloneqq \sum_{\gamma \in \mathrm{P}_{(n-1, 1)}(\mathbb{Z}) \backslash \mathrm{SL}_{n}(\mathbb{Z})} I_{s} (\gamma z, \mathrm{P}_{(n-1, 1)}).
\end{equation*}
Note that all the Eisenstein series converge absolutely for $\Re(s_{i}) \gg 1$.

\subsection{Incomplete Eisenstein series}
For convergence reasons, it is convenient to address the continuous spectrum via incomplete Eisenstein series rather than the Eisenstein series. If $\eta \in \mathcal{C}_{c}^{\infty}(\mathbb{R}_{+}^{n})$ is a smooth compactly supported function, then we define its $n$-dimensional Mellin transform by
\begin{equation*}
\widetilde{\eta}(s_{1}, \ldots, s_{n}) \coloneqq \int_{0}^{\infty} \cdots \int_{0}^{\infty} \eta (y_{1}, \ldots, y_{n}) y_{1}^{-s_{1}} \cdots y_{n}^{-s_{n}} \frac{dy_{1} \cdots dy_{n}}{y_{1} \cdots y_{n}}.
\end{equation*}
The incomplete Eisenstein series associated to $E_{(n_{1}, \ldots, n_{r})}(z, s, \varphi_{1}, \ldots, \varphi_{r})$ and $\eta$ is now defined by
\begin{multline*}
E_{(n_{1}, \ldots, n_{r})}(z, \eta, \varphi_{1}, \ldots, \varphi_{r}) \coloneqq \frac{1}{(2 \pi i)^{r-1}} \int_{(2)} \cdots \int_{(2)} \widetilde{\eta}(s_{1}, \ldots, s_{r-1})\\
\times E_{(n_{1}, \ldots, n_{r})}(z, s, \varphi_{1}, \ldots, \varphi_{r}) ds_{1} \cdots ds_{r-1},
\end{multline*}
where the properties of $\eta$ ensure the convergence of the integral within an appropriate regime. Consequently, the space $L^{2}_{(n_{1}, \ldots, n_{r})}(\mathbb{X}_{n})$ in~\eqref{eq:spectral-decomposition} is spanned by the incomplete Eisenstein series of type $(n_{1}, \ldots, n_{r})$.

\subsection{Approximation argument}
It is evident from~\cite[Theorem~5.1]{Zhang2019} that the main term in higher rank QUE for degenerate maximal parabolic Eisenstein series emerges from the asymptotic formula
\begin{equation*}
\mu_{n, t}(E_{(1, \ldots, 1)}(\cdot, \eta)) \sim \frac{2 \log t}{\Lambda(n)} \langle E_{(1, \ldots, 1)}(\cdot, \eta), 1 \rangle_{n},
\end{equation*}
where $E_{(1, \ldots, 1)}(z, \eta)$ denotes the minimal parabolic incomplete Eisenstein series corresponding to~\eqref{eq:minimal-parabolic}. For the approximation argument, we fix a smooth compactly supported function $f$. We then construct a finite linear combination $G(z)$ of Hecke--Maa{\ss} cusp forms and incomplete Eisenstein series such that $\norm{f-G}_{\infty} < \epsilon$. The analysis thus boils down to estimating $\mu_{n, t}$ on Hecke--Maa{\ss} cusp forms and incomplete parabolic Eisenstein series. It suffices to address the contribution of the latter since Zhang~\cite[Proposition~4.2]{Zhang2019} established that $\mu_{n, t}(\varphi) = 0$ identically for every Hecke--Maa{\ss} cusp form $\varphi$ on $\mathbb{X}_{n}$.

\section{Watson--Ichino-type formula}
The existence of the Watson--Ichino formula is pivotal when the degree is $2$. For example, the methodologies of Luo and Sarnak~\cite[Equation~(9)]{LuoSarnak2004} and Huang~\cite[Page~1228]{Huang2021-3} rely fundamentally on the triple product identity, which simplifies in the the Eisenstein case to
\begin{equation*}
|\mu_{2, t}(\varphi)|^{2} = \frac{1}{2} \frac{\Lambda(\frac{1}{2}, \varphi)^{2} |\Lambda(\frac{1}{2}+2it, \varphi)|^{2}}{\Lambda(1, \operatorname{ad} \varphi) |\Lambda(1+2it)|^{4}},
\end{equation*}
Here $L(s, \varphi)$ denotes the standard $L$-function attached to $\varphi$, defined for $\Re(s) > 1$ by
\begin{equation*}
L(s, \varphi) \coloneqq \sum_{n = 1}^{\infty} \frac{\lambda_{\varphi}(n)}{n^{s}},
\end{equation*}
where $\lambda_{\varphi}(n)$ denotes the normalised $n$-th Fourier coefficient of $\varphi$. Moreover, let $\rho_{\varphi}(n)$ denote the unnormalised $n$-th Fourier coefficient of $\varphi$ such that $\rho_{\varphi}(n) = \rho_{\varphi}(1) \lambda_{\varphi}(n)$. For brevity, we focus on the case where the parity of $\varphi$ is equal to $1$, in which case the completed $L$-function is defined by
\begin{equation}\label{eq:complete-1}
\Lambda(s, \varphi) \coloneqq L_{\infty}(s, \varphi) L(s, \varphi), \qquad L_{\infty}(s, \varphi) \coloneqq \pi^{-s} \Gamma \left(\frac{s+i t_{\varphi}}{2} \right) \Gamma \left(\frac{s-i t_{\varphi}}{2} \right),
\end{equation}
and enjoys the functional equation $\Lambda(s, \varphi) = \Lambda(1-s, \varphi)$.

Similarly, the adjoint square $L$-function attached to $\varphi$ is defined for $\Re(s) > 1$ by
\begin{equation*}
L(s, \operatorname{ad} \varphi) \coloneqq \zeta(2s) \sum_{n = 1}^{\infty} \frac{\lambda_{\varphi}(n^{2})}{n^{s}},
\end{equation*}
and enjoys the functional equation $\Lambda(s, \operatorname{ad} \varphi) = \Lambda(1-s, \operatorname{ad} \varphi)$ with the completed $L$-function
\begin{equation}\label{eq:complete-2}
\Lambda(s, \operatorname{ad} \varphi) \coloneqq L_{\infty}(s, \operatorname{ad} \varphi) L(s, \operatorname{ad} \varphi),
\end{equation}
where
\begin{equation*}
L_{\infty}(s, \operatorname{ad} \varphi) \coloneqq \pi^{-\frac{3s}{2}} \Gamma \left(\frac{s}{2} \right) \Gamma \left(\frac{s}{2}+i t_{\varphi} \right) \Gamma \left(\frac{s}{2}-i t_{\varphi} \right).
\end{equation*}
In particular, the special value at $s = 1$ becomes
\begin{equation}\label{eq:adjoint} 
\Lambda(1, \operatorname{ad} \varphi) = \frac{8}{|\rho_{\varphi}(1)|^{2}}.
\end{equation}
The following result serves as a substitute of the Watson--Ichino formula~\cite{Ichino2008,Watson2002} and is a crucial ingredient in our subsequent argument.
\begin{proposition}\label{prop:Watson-Ichino-substitute}
Let $\varphi$ be an even Hecke--Maa{\ss} cusp form on $\mathrm{SL}_{2}(\mathbb{Z})$, let $\eta \in \mathcal{C}_{c}^{\infty}(\mathbb{R}_{+}^{n-2})$ be a smooth compactly supported function, and let $\mu_{n, t}$ be defined by~\eqref{eq:inner-product}. Then we have for any $n \geq 3$ that
\begin{equation}\label{eq:Watson-Ichino-substitute}
\mu_{n, t}(E_{(2, 1, \ldots, 1)}(\cdot, \eta, \varphi)) = a_{n} \frac{|\Lambda(2-\frac{n}{2}+int)|^{2}}{|\Lambda(\frac{n}{2}+int)|^{2}} \left\langle \varphi, \left|E \left(\cdot, 1-\frac{n}{4}+\frac{int}{2} \right) \right|^{2} \right\rangle_{2},
\end{equation}
where\footnote{The ellipsis hides an explicitly computable quantity extraneous to the present objective.}
\begin{equation}\label{eq:a-n}
a_{n} \coloneqq \frac{(-1)^{n} (n!)^{2}}{72} \widetilde{\eta}(\cdots).
\end{equation}
If $\varphi$ is odd, then the left-hand side of~\eqref{eq:Watson-Ichino-substitute} vanishes identically.
\end{proposition}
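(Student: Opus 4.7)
The plan is a careful two-step unfolding of the inner product, followed by an explicit torus/abelian integration to separate the $\mathrm{GL}_{2}$ data from the higher-rank archimedean factors. First, I would express the incomplete Eisenstein series through its Mellin inversion in $\eta$ and reduce the problem to evaluating
\[
\int_{\mathbb{X}_{n}} \bigl|E_{(n-1, 1)}(z, 1/2+it, \mathbbm{1})\bigr|^{2} E_{(2, 1, \ldots, 1)}(z, s, \varphi) \, d\mu_{n}(z)
\]
for $s$ in a region of absolute convergence, to be extended later by analytic continuation. A standard Rankin--Selberg unfolding of $E_{(2, 1, \ldots, 1)}(z, s, \varphi)$ converts this integral into
\[
\int_{\mathrm{P}_{(2, 1, \ldots, 1)}(\mathbb{Z}) \backslash \mathbb{H}^{n}} \bigl|E_{(n-1, 1)}(z, 1/2+it, \mathbbm{1})\bigr|^{2} \varphi(\mathfrak{m}_{2}(z)) \, I_{s}(z, \mathrm{P}_{(2, 1, \ldots, 1)}) \, d\mu_{n}(z).
\]

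Next, I would exploit the Iwasawa decomposition to factor the integral along the $\mathrm{GL}_{2}$ block carrying $\varphi$ and the remaining torus/abelian directions. Since $\varphi$ is cuspidal while $I_{s}$ and $\varphi(\mathfrak{m}_{2}(z))$ are invariant under the unipotent entries lying outside the $\mathrm{GL}_{2}$ block, integration over those variables projects $|E_{(n-1, 1)}|^{2}$ onto its appropriate constant term in those directions. I would then insert the Fourier--Whittaker (or Mellin--Barnes) expansion of the degenerate maximal parabolic Eisenstein series and compute the torus integrals over $y_{1}, \ldots, y_{n-2}$. These $\mathrm{GL}_{1}$-type integrations yield products of gamma functions and completed Riemann zetas that, after suitable rearrangement via functional equations, combine into the ratio $|\Lambda(2-n/2+int)|^{2}/|\Lambda(n/2+int)|^{2}$, while $\widetilde{\eta}$ becomes evaluated at a specific shift of its arguments, contributing to $a_{n}$ in~\eqref{eq:a-n}.

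Finally, I would identify the surviving integral over $\mathrm{SL}_{2}(\mathbb{Z}) \backslash \mathbb{H}^{2}$ as the inner product $\langle \varphi, |E(\cdot, 1-n/4+int/2)|^{2} \rangle_{2}$. The shifted spectral parameter $1-n/4+int/2$ is forced by matching the exponent of the $\mathrm{GL}_{2}$ $y$-coordinate inherited from $I_{1/2+it}(\cdot, \mathrm{P}_{(n-1, 1)})$ once the other torus integrations have been carried out. The vanishing when $\varphi$ is odd is then immediate: the reflection $z \mapsto -\overline{z}$ preserves the $\mathrm{GL}_{2}$ fundamental domain, its invariant measure, and $|E(\cdot, s)|^{2}$, but negates an odd Hecke--Maa{\ss} cusp form.

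The principal obstacle is the bookkeeping of gamma-factor shifts in the torus integrations: one must track the exponents carefully and invoke the functional equation $\Lambda(s) = \Lambda(1-s)$ and the gamma duplication formula at the right places so that the emerging $y$-integrand is recognisable as the $y$-expansion of $|E(\cdot, 1-n/4+int/2)|^{2}$ on $\mathrm{GL}_{2}$. A secondary technical point is the justification of the unfolding outside the region of absolute convergence, which should follow from the meromorphic continuation of parabolic Eisenstein series together with standard contour shifting in $s$ with $t$ real.
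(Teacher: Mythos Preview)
Your approach is plausible in outline but differs substantially from the paper's. The paper does not carry out a direct unfolding and torus integration; instead it invokes a recursive identity already established by Zhang~\cite[Page~29]{Zhang2019}, namely
\[
\mu_{n, \nu}(E_{(2, 1, \ldots, 1)}(\cdot, \eta, \varphi)) = -n^{2} \frac{|\Lambda(n \nu-1)|^{2}}{|\Lambda(n \nu)|^{2}} \, \mu_{n-1, \frac{n \nu-1}{n-1}}(E_{(2, 1, \ldots, 1)}(\cdot, \eta^{\prime}, \varphi)),
\]
which lowers the rank by one while peeling off a single factor $|\Lambda(n\nu-1)|^{2}/|\Lambda(n\nu)|^{2}$ and replacing $\eta$ by an explicit $\eta'$ in one fewer variable. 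Iterating $n-2$ times with $\nu = \tfrac{1}{2}+it$ telescopes the zeta ratios into $|\Lambda(2-\tfrac{n}{2}+int)|^{2}/|\Lambda(\tfrac{n}{2}+int)|^{2}$ (after the functional equation) and terminates at the $\mathrm{GL}_{2}$ inner product with the shifted spectral parameter $1-\tfrac{n}{4}+\tfrac{int}{2}$. The constant $a_{n}$ collects the accumulated $-n^{2}$ factors together with $\widetilde{\eta}$ evaluated at the shifts tracked through the recursion.

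Your direct method would, in effect, reprove Zhang's recursion from scratch: the step where you ``project $|E_{(n-1,1)}|^{2}$ onto its appropriate constant term'' is precisely the computation Zhang performs (one rank at a time), and it is not as innocuous as your sketch suggests---the constant term of $|E_{(n-1,1)}|^{2}$, not of $E_{(n-1,1)}$ itself, must be computed along the relevant unipotent, which requires the full Fourier expansion and some care with cross terms. The paper's route is shorter because it simply cites this as input; your route is self-contained but would demand considerably more bookkeeping, and you should expect the torus integrals to organise themselves most naturally into the same one-step-at-a-time recursion rather than a single closed-form product.
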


\begin{proof}
Recall that Zhang~\cite[Page~5]{Zhang2019} employs the normalised Haar measure given by
\begin{equation*}
\frac{d\mu_{n}(z)}{n \prod_{k = 2}^{n} \Lambda(k)}.
\end{equation*}
For convenience, we introduce the ad hoc notation
\begin{equation*}
\mu_{n, \nu}(f) \coloneqq \langle f, |E_{(n-1, 1)}(\cdot, \nu, \mathbbm{1})|^{2} \rangle_{n} = \int_{\mathbb{X}_{n}} f(z) |E_{(n-1, 1)}(z, \nu, \mathbbm{1})|^{2} d\mu_{n}(z).
\end{equation*}
For $n \geq 4$, the second step in his computation~\cite[Page 29]{Zhang2019} reads (with some adjustments to the notation)
\begin{multline*}
\frac{1}{n \prod_{k = 2}^{n} \Lambda(k)} \mu_{n, \nu}(E_{(2, 1, \ldots, 1)}(\cdot, \eta, \varphi))
 = \frac{1}{(n-1) \prod_{k = 2}^{n-1} \Lambda(k)} \frac{-(n^{2}-n)}{\Lambda(n)} \frac{|\Lambda(n \nu-1)|^{2}}{|\Lambda(n \nu)|^{2}}\\
\times \mu_{\frac{n \nu-1}{n-1}, n-1}(E_{(2, 1, \ldots, 1)}(\cdot, \eta^{\prime}, \varphi)).
\end{multline*}
This expression now boils down to
\begin{equation*}
\mu_{n, \nu}(E_{(2, 1, \ldots, 1)}(\cdot, \eta, \varphi)) = -n^{2} \frac{|\Lambda(n \nu-1)|^{2}}{|\Lambda(n \nu)|^{2}} \mu_{n-1, \frac{n \nu-1}{n-1}}(E_{(2, 1, \ldots, 1)}(\cdot, \eta^{\prime}, \varphi)).
\end{equation*}
Note that the incomplete Eisenstein series on the left-hand side is associated to the partition $2+1+\cdots+1 = n$; hence $\eta$ is a smooth compactly supported function of $n-2$ variables. The incomplete Eisenstein series on the right-hand side is associated to the partition $2+1+\cdots+1=n-1$; hence $\eta^{\prime}$ is a smooth compactly supported function of $n-3$ variables. More precisely, $\eta$ and $\eta^{\prime}$ are related through their Mellin transforms
\begin{equation*}
\widetilde \eta^{\prime}(s_{1}, \ldots, s_{n-3}) = \widetilde{\eta}(s_{1}+c , \ldots, s_{n-3}+c, -(s_{1}+\cdots+s_{n-3}+c)),
\end{equation*}
where $c = \frac{2-\Re(\nu)}{n-1}-1$. Iterating this process $n-2$ times with $\nu = \frac{1}{2}+it$ and noting that the case of $n = 3$ is peculiar, we obtain the desired claim.
\end{proof}

Moreover, the right-hand side of~\eqref{eq:Watson-Ichino-substitute} undergoes further simplification. If we denote
\begin{equation*}
\mathcal{I}(s, \varphi) \coloneqq \int_{\mathbb{X}_{2}} \varphi(z) E \left(z, 1-\frac{n}{4}+\frac{int}{2} \right) E(z, s) d\mu_{2}(z),
\end{equation*}
then the standard unfolding argument leads to
\begin{equation*}
\mathcal{I}(s, \varphi) = \int_{0}^{\infty} \int_{0}^{1} \varphi(z) E \left(z, 1-\frac{n}{4}+\frac{int}{2} \right) y^{s} \, \frac{dx dy}{y^{2}}.
\end{equation*}
The right-hand side vanishes identically for $\varphi$ odd, whereas for $\varphi$ even, we compute
\begin{equation*}
\mathcal{I}(s, \varphi) = \frac{2\rho_{\varphi}(1)}{\Lambda(2-\frac{n}{2}+int)} \sum_{m = 1}^{\infty} \frac{\lambda_{\varphi}(m) \sigma_{\frac{n}{2}-1-int}(m)}{m^{s-\frac{1}{2}+\frac{n}{4}-\frac{int}{2}}} \int_{0}^{\infty} K_{it_{\varphi}}(2\pi y) K_{\frac{1}{2}-\frac{n}{4}+\frac{int}{2}}(2 \pi y) y^{s} \frac{dy}{y}.
\end{equation*}
Building on Luo and Sarnak~\cite[Section~2]{LuoSarnak1995} and substituting~\cite[Lemma~4.1]{Zhang2019}, it follows that
\begin{align*}
\mathcal{I}(s, \varphi) &= \frac{\rho_{\varphi}(1)}{4\pi^{2s}} \Gamma \left(\frac{s+it_{\varphi}}{2}+\frac{1}{4}-\frac{n}{8}+\frac{int}{4} \right) \Gamma \left(\frac{s-it_{\varphi}}{2}+\frac{1}{4}-\frac{n}{8}+\frac{int}{4} \right)\\
& \quad \times \Gamma \left(\frac{s+it_{\varphi}}{2}-\frac{1}{4}+\frac{n}{8}-\frac{int}{4} \right) \Gamma \left(\frac{s-it_{\varphi}}{2}-\frac{1}{4}+\frac{n}{8}-\frac{int}{4} \right)\\
& \quad \times \frac{L(s-\frac{1}{2}+\frac{n}{4}-\frac{int}{2}, \varphi) L(s+\frac{1}{2}-\frac{n}{4}+\frac{int}{2}, \varphi)}{\Lambda(2-\frac{n}{2}+int) \Lambda(2s)}.
\end{align*}
Setting $s = 1-\frac{n}{4}-\frac{int}{2}$ now yields
\begin{equation*}
\mu_{n, t}(E_{(2, 1, \ldots, 1)}(\cdot, \eta, \varphi)) = a_{n} \frac{|\Lambda(2-\frac{n}{2}+int)|^{2}}{|\Lambda(\frac{n}{2}+int)|^{2}} 
 \mathcal{I}_{\varphi} \left(1-\frac{n}{4}-\frac{int}{2} \right),
\end{equation*}
which is equal to
\begin{multline}\label{eq:full}
\frac{a_{n} \rho_{\varphi}(1)}{4\pi^{2-\frac{n}{2}-int}} \left|\Gamma \left(\frac{3-n}{4}+\frac{it_{\varphi}}{2} \right) \right|^{2} \Gamma \left(\frac{1}{4}+\frac{i(t_{\varphi}-nt)}{2} \right) \Gamma \left(\frac{1}{4}-\frac{i(t_{\varphi}+nt)}{2} \right)\\
\times \frac{L(\frac{1}{2}-int, \varphi) L(\frac{3-n}{2}, \varphi)}{|\Lambda(\frac{n}{2}+int)|^{2}}.
\end{multline}
This expression is consistent with the computation of Huang~\cite[Equation~(3.1)]{Huang2021-3} when $n = 2$. Immediate consequences include the following Watson--Ichino-type formula.
\begin{corollary}\label{prop:Watson-Ichino}
Let $\varphi$ be an even Hecke--Maa{\ss} cusp form on $\mathrm{SL}_{2}(\mathbb{Z})$, let $\eta \in \mathcal{C}_{c}^{\infty}(\mathbb{R}_{+}^{n-2})$ be a smooth compactly supported function, and let $\mu_{n, t}$ be defined by~\eqref{eq:inner-product}. Then we have for any $n \geq 3$ that
\begin{equation}\label{eq:Watson-Ichino}
\mu_{n, t}(E_{(2, 1, \ldots, 1)}(\cdot, \eta, \varphi)) = \frac{a_{n} \rho_{\varphi}(1)}{4\pi^{-int}} \frac{\Lambda(\frac{1}{2}-int, \varphi) \Lambda(\frac{3-n}{2}, \varphi)}{|\Lambda(\frac{n}{2}+int)|^{2}}.
\end{equation}
If $\varphi$ is odd, then the left-hand side of~\eqref{eq:Watson-Ichino} vanishes identically.
\end{corollary}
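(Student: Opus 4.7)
The statement is essentially a bookkeeping consequence of formula~\eqref{eq:full}, which was already derived in the preceding unfolding computation. The task is to reinterpret the gamma factors and powers of $\pi$ appearing there as the archimedean components $L_{\infty}(\cdot, \varphi)$ of the completed $L$-functions $\Lambda(\cdot, \varphi)$ defined in~\eqref{eq:complete-1}. Accordingly, my plan is to regroup the factors on the right-hand side of~\eqref{eq:full} so that the archimedean $L$-factors combine with the Dirichlet series $L(\frac{1}{2}-int, \varphi)$ and $L(\frac{3-n}{2}, \varphi)$ to produce $\Lambda(\frac{1}{2}-int, \varphi) \Lambda(\frac{3-n}{2}, \varphi)$, while checking that the surviving power of $\pi$ is as claimed.

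Concretely, since $t_{\varphi} \in \mathbb{R}$, I would first rewrite
\begin{equation*}
\left|\Gamma \left(\tfrac{3-n}{4}+\tfrac{it_{\varphi}}{2} \right) \right|^{2} = \Gamma \left(\tfrac{(3-n)/2+it_{\varphi}}{2} \right) \Gamma \left(\tfrac{(3-n)/2-it_{\varphi}}{2} \right),
\end{equation*}
which matches the gamma content of $L_{\infty}(\frac{3-n}{2}, \varphi) = \pi^{(n-3)/2} |\Gamma(\frac{3-n}{4}+\frac{it_{\varphi}}{2})|^{2}$. Likewise, I would reinterpret
\begin{equation*}
\Gamma \left(\tfrac{1}{4}+\tfrac{i(t_{\varphi}-nt)}{2} \right) \Gamma \left(\tfrac{1}{4}-\tfrac{i(t_{\varphi}+nt)}{2} \right) = \Gamma \left(\tfrac{(1/2-int)+it_{\varphi}}{2} \right) \Gamma \left(\tfrac{(1/2-int)-it_{\varphi}}{2} \right),
\end{equation*}
which is the gamma content of $L_{\infty}(\frac{1}{2}-int, \varphi) = \pi^{-1/2+int} \Gamma(\frac{1}{4}+\frac{i(t_{\varphi}-nt)}{2}) \Gamma(\frac{1}{4}-\frac{i(t_{\varphi}+nt)}{2})$. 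I would then collect the three powers of $\pi$ -- namely the prefactor $\pi^{-(2-n/2-int)}$ from~\eqref{eq:full}, together with the reciprocals $\pi^{(3-n)/2}$ and $\pi^{-1/2+int}$ needed to extract $L_{\infty}(\frac{3-n}{2}, \varphi)$ and $L_{\infty}(\frac{1}{2}-int, \varphi)$ from the gamma factors -- and verify that they combine into the stated $\pi^{int}$. Once this is done, multiplying by the explicit Dirichlet series $L(\frac{1}{2}-int, \varphi)$ and $L(\frac{3-n}{2}, \varphi)$ already present in~\eqref{eq:full} assembles the completed $L$-functions and yields~\eqref{eq:Watson-Ichino}.

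For the second half of the statement, I would invoke the observation already recorded in the unfolding computation: an odd Hecke--Maa{\ss} cusp form $\varphi$ has a Fourier expansion consisting purely of sine-type coefficients along the unipotent direction, so $\int_{0}^{1} \varphi(x+iy) dx = 0$ identically. This forces the unfolded integral $\mathcal{I}(s, \varphi)$, and therefore $\mu_{n, t}(E_{(2, 1, \ldots, 1)}(\cdot, \eta, \varphi))$ via Proposition~\ref{prop:Watson-Ichino-substitute}, to vanish. There is no genuine obstacle at this stage: all the analytic content sits in the derivation of~\eqref{eq:full} (the unfolding of $\mathcal{I}(s, \varphi)$ together with the Mellin--Barnes evaluation from~\cite[Lemma~4.1]{Zhang2019}), and the remaining work is the purely formal repackaging of gamma factors into completed $L$-functions. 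The only point requiring care is keeping track of signs inside the gamma arguments and the exact balance of $\pi$-powers; this is entirely mechanical.
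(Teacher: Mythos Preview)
Your approach is exactly the paper's: the proof there consists of a single line citing~\eqref{eq:complete-1},~\eqref{eq:complete-2}, and~\eqref{eq:full}, i.e.\ precisely the repackaging of gamma factors into $L_{\infty}(\cdot,\varphi)$ that you describe. One small imprecision: the vanishing for odd $\varphi$ is not because $\int_{0}^{1}\varphi(x+iy)\,dx=0$ (that is cuspidality, true for even $\varphi$ as well) but because the sine-type Fourier expansion of $\varphi$ pairs against the cosine-type expansion of the Eisenstein series, so $\int_{0}^{1}\varphi(z)E(z,\cdot)\,dx=0$; your mention of ``sine-type coefficients'' is the right ingredient, just aimed at the wrong integral.
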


\begin{proof}
The claim follows from~\eqref{eq:complete-1},~\eqref{eq:complete-2}, and~\eqref{eq:full}.
\end{proof}

\begin{corollary}\label{cor:Zhang}
Let $\varphi$ be an even Hecke--Maa{\ss} cusp form on $\mathrm{SL}_{2}(\mathbb{Z})$, let $\eta \in \mathcal{C}_{c}^{\infty}(\mathbb{R}_{+}^{n-2})$ be a smooth compactly supported function, and let $\mu_{n, t}$ be defined by~\eqref{eq:inner-product}. Then we have for any $n \geq 3$ that
\begin{equation}\label{eq:Zhang-Watson-Ichino}
|\mu_{n, t}(E_{(2, 1, \ldots, 1)}(\cdot, \eta, \varphi))|^{2} = \frac{|a_{n}|^{2}}{2} \frac{|\Lambda(\frac{1}{2}-int, \varphi)|^{2} \Lambda(\frac{3-n}{2}, \varphi)^{2}}{|\Lambda (\frac{n}{2}+int)|^{4} \Lambda(1, \operatorname{ad} \varphi)}.
\end{equation}
If $\varphi$ is odd, then the left-hand side of~\eqref{eq:Zhang-Watson-Ichino} vanishes identically.
\end{corollary}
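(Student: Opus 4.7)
The plan is to obtain the claim as a direct algebraic consequence of Corollary~\ref{prop:Watson-Ichino} combined with the special value identity~\eqref{eq:adjoint}, so the work is entirely bookkeeping rather than analysis. First I would take the identity
\begin{equation*}
\mu_{n, t}(E_{(2, 1, \ldots, 1)}(\cdot, \eta, \varphi)) = \frac{a_{n} \rho_{\varphi}(1)}{4\pi^{-int}} \frac{\Lambda(\tfrac{1}{2}-int, \varphi) \Lambda(\tfrac{3-n}{2}, \varphi)}{|\Lambda(\tfrac{n}{2}+int)|^{2}}
\end{equation*}
provided by the preceding corollary, and compute $|\cdot|^{2}$ of both sides. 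The factor $\pi^{-int}$ has unit modulus since $t \in \mathbb{R}$; moreover, since $\varphi$ is an even Hecke--Maa{\ss} cusp form on $\mathrm{SL}_{2}(\mathbb{Z})$ the Hecke eigenvalues $\lambda_{\varphi}(n)$ are real, so $\Lambda(\frac{3-n}{2}, \varphi)$ is real and squares without a complex conjugation. The denominator $|\Lambda(\frac{n}{2}+int)|^{2}$ already appears on the right-hand side and simply squares to $|\Lambda(\frac{n}{2}+int)|^{4}$.

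Next I would eliminate the Fourier coefficient $\rho_{\varphi}(1)$ in favour of an $L$-value by invoking~\eqref{eq:adjoint}, which reads $|\rho_{\varphi}(1)|^{2} = 8/\Lambda(1, \operatorname{ad} \varphi)$. Substituting, the constants rearrange as
\begin{equation*}
\frac{|a_{n}|^{2} |\rho_{\varphi}(1)|^{2}}{16} = \frac{|a_{n}|^{2}}{16} \cdot \frac{8}{\Lambda(1, \operatorname{ad} \varphi)} = \frac{|a_{n}|^{2}}{2 \Lambda(1, \operatorname{ad} \varphi)},
\end{equation*}
which exactly matches the prefactor in~\eqref{eq:Zhang-Watson-Ichino}. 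Collecting everything gives the asserted formula. The odd-parity claim is immediate from the corresponding statement of Corollary~\ref{prop:Watson-Ichino}, since vanishing of the inner product trivially implies vanishing of its modulus squared.

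There is no genuine obstacle here: the only point meriting a brief remark is the reality of $\Lambda(\frac{3-n}{2}, \varphi)$, which follows from the real-analyticity of the Fourier coefficients of a Hecke-normalised even Maa{\ss} cusp form together with the real-analyticity of the archimedean factor~\eqref{eq:complete-1} at real $s$. With that comment in place the proof occupies no more than two or three lines.
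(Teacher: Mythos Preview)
Your proposal is correct and is exactly the paper's approach: the paper's proof reads in full ``The claim follows from Corollary~\ref{prop:Watson-Ichino} and~\eqref{eq:adjoint}.'' Your additional remarks on the reality of $\Lambda(\tfrac{3-n}{2},\varphi)$ and the unit modulus of $\pi^{-int}$ simply make explicit what the paper leaves implicit.
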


\begin{proof}
The claim follows from Corollary~\ref{prop:Watson-Ichino} and~\eqref{eq:adjoint}.
\end{proof}

\section{Proof of Theorem~\ref{thm:main}}
This section is devoted to the proof of Theorem~\ref{thm:main}. Using Stirling's approximation~\cite[Equation~(8.327.1)]{GradshteynRyzhik2007}, the asymptotic behaviour of the gamma factors in~\eqref{eq:full} is determined~as
\begin{equation*}
\frac{\Gamma(\frac{1}{4}+\frac{i(t_{\varphi}-nt)}{2}) \Gamma(\frac{1}{4}-\frac{i(t_{\varphi}+nt)}{2})}{|\Gamma(\frac{n}{4}+\frac{int}{2})|^{2}}\\
 = \left(\frac{2}{nt} \right)^{\frac{n-1}{2}} e^{\frac{i\pi}{4}-int \log(\frac{nt}{2e})} \left(1+O \left(\frac{1}{t} \right) \right).
\end{equation*}
It thus follows from~\eqref{eq:full} that
\begin{multline*}
\int_{T}^{2T} \mu_{n, t}(E_{(2, 1, \ldots, 1)}(\cdot, \eta, \varphi)) dt = \frac{a_{n} \rho_{\varphi}(1)}{4\pi^{2-n-int}} \left(\frac{2}{n} \right)^{\frac{n-1}{2}} e^{\frac{i\pi}{4}} \left| \Gamma \left(\frac{3-n}{4}+\frac{it_{\varphi}}{2} \right) \right|^{2} L \left(\frac{3-n}{2}, \varphi \right)\\
\times \int_{T}^{2T} e^{-int \log(\frac{nt}{2e\pi})} \frac{L(\frac{1}{2}-int, \varphi)}{|\zeta(\frac{n}{2}+int)|^{2}} \frac{dt}{t^{\frac{n-1}{2}}}+O_{n} \left(\frac{\sqrt{\log T}}{T^{\frac{n+1}{2}}} \right),
\end{multline*}
where the error term arises from the application of the Cauchy--Schwarz inequality and the result of Jutila~\cite[Theorem~3]{Jutila1996}. It now suffices to estimate the oscillatory first moment
\begin{equation*}
\mathcal{F}(T) \coloneqq \int_{T}^{2T} e^{-int \log(\frac{nt}{2e\pi})} \frac{L(\frac{1}{2}-int, \varphi)}{|\zeta(\frac{n}{2}+int)|^{2}} \frac{dt}{t^{\frac{n-1}{2}}}.
\end{equation*}
The computation of the mean value becomes more straightforward in comparison to the case of $n = 2$ due to the presence of the Riemann zeta function $\zeta(s)$ on the line $\Re(s) = \frac{n}{2} > 1$, which lies within the region of absolute convergence unlike when $n = 2$.

Let $U \in (0, \frac{1}{2})$ be an auxiliary parameter whose optimal size will be determined later. We introduce a smooth weight function $v$ satisfying the following three properties:
\begin{itemize}
\item $v$ is compactly supported on $[1-U, 2+U]$;
\item $v(t) = 1$ for $t\in [1, 2]$ and $0 \leq v(t) \leq 1$ otherwise;
\item $|v^{(j)}(t)| \ll_{j} U^{-j}$ for all fixed $j \in \mathbb{N}_{0}$ and $t \in \mathbb{R}$.
\end{itemize}
In the ensuing analysis, let $\epsilon > 0$ represent a small quantity to be fixed later. We define
\begin{equation}\label{eq:smooth}
\mathcal{F}_{v}(T) \coloneqq \int_{0}^{\infty} e^{-int \log(\frac{nt}{2e\pi})} \frac{L(\frac{1}{2}-int, \varphi)}{|\zeta(\frac{n}{2}+int)|^{2}} v \left(\frac{t}{T} \right) \frac{dt}{t^{\frac{n-1}{2}}}.
\end{equation}
Applying the Cauchy--Schwarz inequality in conjunction with~\cite[Theorem~3]{Jutila1996} once again implies
\begin{align*}
|\mathcal{F}(T)-\mathcal{F}_{v}(T)| &\ll_{n, \varphi} ([t\log t+t]^{T(1+U)}_{T}+T^{\frac{2}{3}+\epsilon})^{\frac{1}{2}} \sqrt{\frac{U}{T^{n-2}}} \\
&\ll_{n, \varphi} T^{-\frac{n-3}{2}} U \sqrt{\log T}+T^{\frac{4}{3}-\frac{n}{2}+\epsilon} \sqrt{U}.
\end{align*}
The Riemann zeta function in the denominator in~\eqref{eq:smooth} can be expanded as
\begin{equation*}
\frac{1}{|\zeta(\frac{n}{2}+int)|^{2}} = \sum_{1 \leq k, \ell \leq K} \frac{\mu(k) \mu(\ell)}{(k \ell)^{\frac{n}{2}}} \left(\frac{k}{\ell} \right)^{int}+O(K^{1-\frac{n}{2}})
\end{equation*}
for a parameter $K > 0$ at our discretion. The error term contributes $O(K^{1-\frac{n}{2}} T^{-\frac{n-3}{2}} \sqrt{\log T})$ in~\eqref{eq:smooth}. For $t \asymp T$, the approximate functional equation~\cite[Theorem~5.3]{IwaniecKowalski2004} reads
\begin{equation}\label{eq:approx}
L \left(\frac{1}{2}-int, \varphi \right) = \sum_{m \leq T^{1+\epsilon}} \frac{\lambda_{\varphi}(m)}{m^{\frac{1}{2}-int}} \mathcal{W}_{t}^{-}(m)-i \left(\frac{nt}{2e\pi} \right)^{2int} \sum_{m \leq T^{1+\epsilon}} \frac{\lambda_{\varphi}(m)}{m^{\frac{1}{2}+int}} \mathcal{W}_{t}^{+}(m)+O_{n, \varphi}(T^{-\frac{1}{2}+\epsilon}),
\end{equation}
where
\begin{equation*}
\mathcal{W}_{t}^{\pm}(y) = \frac{1}{2\pi i} \int_{\epsilon-iT^{\epsilon}}^{\epsilon+iT^{\epsilon}} \left(\frac{nt}{2\pi y} \right)^{s} e^{s^{2} \pm \frac{i\pi s}{2}} \frac{ds}{s}.
\end{equation*}
The error term contributes $O_{n, \varphi}(T^{-\frac{n-1}{2}+\epsilon})$ in~\eqref{eq:smooth}, and it remains to estimate the expression
\begin{equation}\label{eq:after-AFE}
\frac{1}{2\pi i} \int_{\epsilon-iT^{\epsilon}}^{\epsilon+iT^{\epsilon}} \sum_{m \leq T^{1+\epsilon}} \sum_{1 \leq k, \ell \leq K} \frac{\lambda_{\varphi}(m) \mu(k) \mu(\ell)}{\sqrt m (k \ell)^{\frac{n}{2}}} \int_{0}^{\infty} e^{ih(t)} v_{1}(t) dt \left(\frac{n}{2\pi m} \right)^{s} e^{s^{2}-\frac{i\pi s}{2}} \frac{ds}{s},
\end{equation}
where $v_{1}(t) \coloneqq t^{s-\frac{n-1}{2}} v(\frac{t}{T})$ and $h(t) \coloneqq -nt \log(\frac{knt}{2e\pi \ell m})$. It suffices to concentrate on the first term in~\eqref{eq:approx}, since the treatment of the second term proceeds similarly. We are now~prepared to implement the stationary phase analysis to the oscillatory integral
\begin{equation*}
\mathcal{J} \coloneqq \int_{0}^{\infty} e^{ih(t)} v_{1}(t) dt.
\end{equation*}
We observe $v_{1}$ is a smooth compactly supported function on $[T(1-U), T(2+U)]$ satisfying
\begin{equation*}
v_{1}^{(j)}(t) \ll_{j} T^{-\frac{n-1}{2}+\epsilon}(TU)^{-j}
\end{equation*}
for all fixed $j \in \mathbb{N}_{0}$ and $U \leq T^{-\epsilon}$. Moreover, $h$ is a smooth function such that $|h^{(j)}(t)| \asymp_{j} T^{1-j}$ for $j \geq 2$ and $h^{\prime}(t) = 0$ if and only if $t = \frac{2\pi \ell m}{kn}$. In anticipation of further simplifications, it is convenient to handle the following~two cases separately.

\medskip

\noindent \textbf{Case I.} $\frac{2\pi \ell m}{kn} < \frac{T}{2}$ or $ \frac{2\pi \ell m}{kn} > 3T$.

\medskip

In this case, we have $|h^{\prime}(t)| \gg 1$, and~\cite[Lemma~8.1]{BlomerKhanYoung2013} yields
\begin{equation*}
\mathcal{J} \ll_{A} T^{-\frac{n-3}{2}+\epsilon}(T^{-\frac{A}{2}}+(TU)^{-A}) \ll_{A} T^{-\frac{A}{2}-\frac{n-3}{2}+\epsilon}
\end{equation*}
for any sufficiently large constant $A > 0$ and $U \geq T^{-\frac{1}{2}}$. The rapid decay of $s \mapsto e^{s^{2}-\frac{i\pi s}{2}}$ on vertical lines justifies that the contribution of such terms is negligibly small.

\medskip
 
\noindent \textbf{Case II.} $\frac{T}{2} \leq \frac{2\pi \ell m}{kn} \leq 3T$.

\medskip
 
In this case, we have $|h^{\prime}(t)| \ll 1$ and $|h^{\prime \prime}(t)| \gg T^{-1}$, and~\cite[Proposition 8.2]{BlomerKhanYoung2013}~with $U \geq T^{-\frac{1}{2}+\delta}$ for fixed $0 < \delta < \frac{1}{10}$ yields
\begin{equation*}
\mathcal{J} = \frac{e(\frac{\ell m}{k})}{\sqrt{\frac{kn}{2\pi \ell m}}} w_{0} \left(\frac{2\pi \ell m}{kn} \right)+O_{A}(T^{-A}),
\end{equation*}
where $w_{0}$ is a smooth compactly supported function on $[T(1-U), T(2+U)]$ satisfying
\begin{equation*}
|w_{0}^{(j)}(t)| \ll_{j} T^{-\frac{n-1}{2}+\epsilon}(TU)^{-j}.
\end{equation*}
We choose $K$ such that $ \frac{2\pi \ell m}{kn} > 3T$ for $m \leq T^{1+\epsilon}$, ensuring that $0 < K \leq \frac{2\pi}{3n} T^{\epsilon}$ suffices. As a result, the expression~\eqref{eq:after-AFE} is equal up to an absolute and effectively computable constant~to
\begin{equation*}
\int_{\epsilon-iT^{\epsilon}}^{\epsilon+iT^{\epsilon}} \sum_{1 \leq k, \ell \leq K} \frac{\mu(k) \mu(\ell)}{k^{\frac{n+1}{2}}\ell^{\frac{n-1}{2}}} D \left(s, \varphi; \frac{\ell}{k} \right) \left(\frac{n}{2\pi} \right)^{s} e^{s^{2}-\frac{i\pi s}{2}} \frac{ds}{s}+O_{A}(T^{-A}),
\end{equation*}
where
\begin{equation*}
D \left(s, \varphi; \frac{\ell}{k} \right) \coloneqq \sum_{m = 1}^{\infty} \frac{\lambda_{\varphi}(m) e(\frac{\ell m}{k})}{m^{s}} w \left(\frac{\ell m}{k} \right)
\end{equation*}
upon setting $w(x) \coloneqq w_{0}(\frac{2\pi x}{n})$. Moreover, we define the cuspidal analogue of the Estermann zeta function by
\begin{equation*}
L\left(s, \varphi; \frac{\ell}{k} \right) \coloneqq \sum_{m = 1}^{\infty} \frac{\lambda_{\varphi}(m) e(\frac{\ell m}{k})}{m^{s}}
\end{equation*}
for $\Re(s) > 1$. After meromorphic continuation, Mellin inversion leads to
\begin{equation}\label{eq:Mellin}
D \left(s, \varphi; \frac{\ell}{k} \right) = \frac{1}{2\pi i} \int_{(2)} L \left(s+s^{\prime}, \varphi; \frac{\ell}{k} \right) \widehat w(s^{\prime}) \left(\frac{k}{\ell} \right)^{s^{\prime}} ds^{\prime},
\end{equation}
where $\widehat{w}(s^{\prime})$ denotes the Mellin transform of $w$ in the variable $s^{\prime}$. Integration by parts implies for any $j \in \mathbb{N}_{0}$ that
\begin{equation*}
\widehat{w}(s^{\prime}) = \frac{(-1)^{j}}{s^{\prime}(s^{\prime}+1) \cdots (s^{\prime}+j-1)} \int_{0}^{\infty} w^{(j)}(x) x^{s^{\prime}+j-1} dx \ll_{j} \frac{T^{\sigma-\frac{n-1}{2}+\epsilon} U^{-j}}{1+|\tau^{\prime}|^{j}},
\end{equation*}
where $s^{\prime} \coloneqq \sigma+i\tau^{\prime}$. If we reduce the fraction to lowest terms $ \frac{\ell}{k} = \frac{u}{q}$ with $(u, q) = 1$, then it follows from the functional equation and the Phragm\'{e}n--Lindel\"{o}f principle that
\begin{equation*}
L\left(\frac{1}{2}+i\tau^{\prime}, \varphi; \frac{u}{q} \right) \ll_{\varphi} (q(1+|\tau^{\prime}|))^{\frac{1+\epsilon}{2}}.
\end{equation*}
Since $q \leq k$, shifting the contour in~\eqref{eq:Mellin} to $\Re(s^{\prime}) = \frac{1}{2}-\epsilon$ yields
\begin{align*}
D \left(s, \varphi; \frac{\ell}{k} \right) &\ll_{\varphi} k^{\frac{1+\epsilon}{2}} T^{1-\frac{n}{2}} U^{-2} \left(\frac{k}{\ell} \right)^{\frac{1}{2}-\epsilon} \int_{\mathbb{R}} \frac{(1+|\tau+\tau^{\prime}|)^{\frac{1+\epsilon}{2}}}{1+|\tau^{\prime}|^{2}} d\tau^{\prime}\\
&\ll_{\varphi} k^{1-\frac{\epsilon}{2}} \ell^{-\frac{1}{2}+\epsilon} T^{1-\frac{n}{2}} U^{-2} P(\tau),
\end{align*}
where $P$ is a fixed well-behaved function of polynomial growth with respect to $\tau \coloneqq \Im(s)$.

Synthesising the preceding observations, we obtain
\begin{multline*}
\int_{T}^{2T} \mu_{n, t}(E_{(2, 1, \ldots, 1)}(\cdot, \eta, \varphi)) dt\\
\ll_{n, \varphi} T^{-\frac{n-3}{2}} U \sqrt{\log T}+T^{\frac{4}{3}-\frac{n}{2}+\epsilon} \sqrt{U}+K^{1-\frac{n}{2}} T^{-\frac{n-3}{2}} \sqrt{\log T}+T^{1-\frac{n}{2}} U^{-2}.
\end{multline*}
It is now convenient to set $K = \frac{2\pi}{3n} T^{\epsilon}$ and $U = T^{-\eta}$ for $\epsilon \leq \eta < \frac{1}{2}$. Under these optimisations, the right-hand side is bounded by
\begin{equation*}
(T^{-\eta}+T^{-\frac{1}{6}-\frac{\eta}{2}+\epsilon}+T^{-\frac{\epsilon n}{2}+\epsilon}+T^{-\frac{1}{2}+2\eta}) T^{-\frac{n-3}{2}}
 \sqrt{\log T},
\end{equation*}
which achieves a minimum when
\begin{equation*}
\eta = \frac{2}{15}+\frac{2\epsilon}{5}, \qquad \epsilon = \frac{7}{3(5n-2)}.
\end{equation*}
Altogether, we conclude that
\begin{equation*}
\int_{T}^{2T} \mu_{n, t}(E_{(2, 1, \ldots, 1)}(\cdot, \eta, \varphi)) dt \ll_{n, \varphi} T^{-\frac{n-3}{2}-\frac{7(n-2)}{6(5n-2)}} \sqrt{\log T}.
\end{equation*}
The proof of Theorem~\ref{thm:main} is complete.

\section{Weighted quantum variance}
This section offers an asymptotic formula for a weighted version of the quantum~variance $Q_{n}(\varphi, \psi)$. We denote by $\mathcal{G}_{n, \varphi}(t)$ the gamma factors in the expression for $\mu_{n, t}(E_{(2, 1, \ldots, 1)}(\cdot, \eta, \varphi))$. Applying Stirling's approximation for $t \asymp T$ and $t_{\varphi}$ fixed leads to
\begin{equation}\label{eq:gamma}
|\mathcal{G}_{n, \varphi}(t)|^{2} = \frac{|\Gamma(\frac{1}{4}+\frac{i(t_{\varphi}-nt)}{2}) \Gamma(\frac{1}{4}-\frac{i(t_{\varphi}+nt)}{2})|^{2}}{|\Gamma(\frac{n}{4}+\frac{int}{2})|^{4}} = \left(\frac{2}{n t} \right)^{n-1}+O_{n}(T^{-n}).
\end{equation}
The following proposition serves as an $n$-dimensional analogue of~\cite[Theorem~5]{Huang2021-3}.
\begin{proposition}\label{prop:quantum-variance}
Keep the notation as above. Then we have for $\varphi = \psi$ even that
\begin{multline*}
\lim_{T \to \infty} \frac{T^{n-2}}{\log T} \int_{T}^{2T} \left|\zeta \left(\frac{n}{2}+int \right) \right|^{4} |\mu_{n, t}(E_{2, 1, \ldots, 1}(\cdot, \eta, \varphi))|^{2} dt\\
 = \frac{6 \log n}{\pi^{2}} |a_{n}|^{2} \left(\frac{2\pi}{n} \right)^{n-1} \cosh(\pi t_{\varphi}) \Lambda \left(\frac{3-n}{2}, \varphi \right)^{2},
\end{multline*}
where $a_{n}$ is defined by~\eqref{eq:a-n}.
\end{proposition}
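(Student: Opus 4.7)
The plan is to reduce the weighted second moment to the classical mean square $\int|L(\frac{1}{2}+iu,\varphi)|^{2}\,du$ and then invoke Jutila's asymptotic formula~\cite[Theorem~3]{Jutila1996}. First, I would apply Corollary~\ref{cor:Zhang} and use the factorisation $|\Lambda(\frac{n}{2}+int)|^{4} = |L_{\infty}(\frac{n}{2}+int)|^{4}|\zeta(\frac{n}{2}+int)|^{4}$, whereby the weight $|\zeta(\frac{n}{2}+int)|^{4}$ cancels the zeta factor in the denominator. Factoring also $|\Lambda(\frac{1}{2}-int,\varphi)|^{2} = |L_{\infty}(\frac{1}{2}-int,\varphi)|^{2}|L(\frac{1}{2}-int,\varphi)|^{2}$, the weighted integrand simplifies to
\begin{equation*}
\left|\zeta\left(\tfrac{n}{2}+int\right)\right|^{4}|\mu_{n,t}|^{2} = \frac{|a_{n}|^{2}\,\Lambda(\frac{3-n}{2},\varphi)^{2}}{2\,\Lambda(1,\operatorname{ad}\varphi)}\cdot\frac{|L_{\infty}(\frac{1}{2}-int,\varphi)|^{2}}{|L_{\infty}(\frac{n}{2}+int)|^{4}}\cdot\left|L\left(\tfrac{1}{2}-int,\varphi\right)\right|^{2}.
\end{equation*}

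Next, I would invoke Stirling's approximation, echoing the analysis that led to~\eqref{eq:gamma}. A direct computation shows that the exponential decay factors $e^{-\pi nt}$ in numerator and denominator cancel exactly, and what remains is the polynomial part $\frac{2^{n-1}\pi^{2n-1}}{(nt)^{n-1}}$ up to a multiplicative error of size $O(T^{-1})$ that is negligible after integration. The problem therefore reduces to determining the asymptotic behaviour of
\begin{equation*}
\int_{T}^{2T}\frac{|L(\frac{1}{2}-int,\varphi)|^{2}}{t^{n-1}}\,dt = n^{n-2}\int_{nT}^{2nT}\frac{|L(\frac{1}{2}+iu,\varphi)|^{2}}{u^{n-1}}\,du,
\end{equation*}
where the equality follows from the substitution $u=nt$ together with the reality of the Hecke eigenvalues.

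Finally, I would apply Abel summation against the smooth weight $u^{-(n-1)}$ in conjunction with Jutila's asymptotic formula for the second moment, whose main term is proportional to $L(1,\operatorname{ad}\varphi)$. The Rankin--Selberg density constant $\zeta(2)^{-1} = 6/\pi^{2}$ appearing in Jutila's leading coefficient, combined with the archimedean identity $L_{\infty}(1,\operatorname{ad}\varphi)^{-1} = \cosh(\pi t_{\varphi})$ (which follows from $|\Gamma(\frac{1}{2}+it_{\varphi})|^{2} = \pi/\cosh(\pi t_{\varphi})$), converts the arithmetic $L$-value into $\Lambda(1,\operatorname{ad}\varphi)\cosh(\pi t_{\varphi})$, after which the $\Lambda(1,\operatorname{ad}\varphi)$ factor cancels exactly against the denominator inherited from~\eqref{eq:adjoint}, and the $\cosh(\pi t_{\varphi})$ survives to produce the stated constant. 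The principal obstacle is the precise bookkeeping of constants across the Stirling, substitution, and Jutila steps, and in particular tracking how the logarithmic main term of Jutila's formula combines with the change of variables $u=nt$ to produce the factor $\log n$ appearing in the final answer.
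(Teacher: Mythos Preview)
Your proposal is correct and follows essentially the same route as the paper: invoke Corollary~\ref{cor:Zhang}, cancel the $\zeta$-factors, replace the ratio of archimedean factors by $(2/(nt))^{n-1}$ via Stirling (this is exactly~\eqref{eq:gamma} after the $\pi$-bookkeeping), and then feed the remaining integral $\int_{T}^{2T}|L(\tfrac12-int,\varphi)|^{2}\,t^{1-n}\,dt$ into Jutila's second moment asymptotic with integration by parts, precisely as the paper does ``in accordance with~\cite[Section~3]{Huang2021-3}''. One small slip: your Stirling output $2^{n-1}\pi^{2n-1}/(nt)^{n-1}$ should read $(2\pi/(nt))^{n-1}$, since $|L_{\infty}(\tfrac12-int,\varphi)|^{2}/|L_{\infty}(\tfrac{n}{2}+int)|^{4}$ carries only a factor $\pi^{n-1}$ beyond the bare gamma ratio~\eqref{eq:gamma}; this is exactly the $\pi^{n-1}$ that the paper extracts explicitly in the first display of its proof.
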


\begin{proof}
Using Corollary~\ref{cor:Zhang}, we rearrange
\begin{multline*}
\int_{T}^{2T} \left|\zeta \left(\frac{n}{2}+int \right) \right|^{4} \left|\mu_{n, t}(E_{2, 1, \ldots, 1}(\cdot, \eta, \varphi)) \right|^{2} dt \\
= \frac{|a_{n}|^{2} \pi^{n-1}}{2} \frac{\Lambda(\frac{3-n}{2}, \varphi)^{2}}{\Lambda(1, \operatorname{ad} \varphi)} 
\int_{T}^{2T} |\mathcal{G}_{n, \varphi}(t)|^{2} \left|L \left(\frac{1}{2}-int, \varphi \right) \right|^{2} dt.
\end{multline*}
It follows from~\eqref{eq:gamma} that the integral on the right-hand side evaluates to
\begin{equation*}
\left(\frac{2}{n} \right)^{n-1} \int_{T}^{2T} \left|L \left(\frac{1}{2}-int, \varphi \right) \right|^{2} \frac{dt}{t^{n-1}}+O_{n, \varphi} \left(T^{-n} \int_{T}^{2T} \left|L \left(\frac{1}{2}-int, \varphi \right) \right|^{2} dt \right).
\end{equation*}
Recall the second moment estimate in long intervals due to Jutila~\cite[Theorem~3]{Jutila1996}, namely
\begin{equation*}
\int_{T}^{2T} \left|L \left(\frac{1}{2}+it, \varphi \right) \right|^{2} dt = \frac{12}{\pi^{2}} \Lambda(1, \operatorname{ad} \varphi) \cosh(\pi t_{\varphi}) T(\log T+B_{\varphi})+O_{\varphi, \epsilon}(T^{\frac{2}{3}+\epsilon}),
\end{equation*}
where $B_{\varphi}$ is an explicitly computable constant dependent on $\varphi$. In accordance with~\cite[Section~3]{Huang2021-3}, we derive
\begin{multline*}
\int_{T}^{2T} |\mathcal{G}_{n, \varphi}(t)|^{2} \left|L \left(\frac{1}{2}-int, \varphi \right) \right|^{2} dt\\
 = \frac{12 \log n}{\pi^{2}} \left(\frac{2}{n} \right)^{n-1} \Lambda(1, \operatorname{ad} \varphi) \cosh(\pi t_{\varphi}) \frac{\log T}{T^{n-2}}+O_{n, \varphi}(T^{2-n}).
\end{multline*}
Taking the limit $T \to \infty$ completes the proof of Proposition~\ref{prop:quantum-variance}.
\end{proof}

The statement of Proposition~\ref{prop:quantum-variance} is recast as
\begin{multline*}
\lim_{T \to \infty} \frac{T^{n-2}}{\log T} \int_{T}^{2T} \left|\zeta \left(\frac{n}{2}+int \right) \right|^{4} 
|\mu_{n, t}(E_{2, 1, \ldots, 1}(\cdot, \eta, \varphi))|^{2} dt\\
 = \frac{12 |a_{n}|^{2} \log n}{\pi} \left(\frac{2\pi}{n} \right)^{n-1} L \left(\frac{3-n}{2}, \varphi \right)^{2} V_{n}(\varphi),
\end{multline*}
where $V_{n}(\varphi)$ is defined by~\eqref{eq:V}.

\section{Proof of Theorem~\ref{thm:quantum-variance}} 
Applying Corollary~\ref{prop:Watson-Ichino} for any two even Hecke--Maa{\ss} cusp forms $\varphi$ and $\psi$, we deduce
\begin{multline*}
\int_{T}^{2T} \mu_{n, t}(E_{2, 1, \ldots, 1}(\cdot, \eta, \varphi)) \overline{\mu_{n, t}(E_{2, 1, \ldots, 1}(\cdot, \eta, \psi))} dt\\
 = \frac{|a_{n}|^{2}}{16\pi^{4-2n}} \rho_{\varphi}(1) \overline{\rho_{\psi}(1)} \left|\Gamma \left(\frac{3-n}{4}+\frac{it_{\varphi}}{2} \right) \right|^{2} \left|\Gamma \left(\frac{3-n}{4}+\frac{it_{\psi}}{2} \right) \right|^{2} L \left(\frac{3-n}{2}, \varphi \right) L \left(\frac{3-n}{2}, \psi \right)\\ \times \int_{T}^{2T} \mathcal{G}_{n, \varphi}(t) \overline{G_{n, \psi}(t)} \frac{L(\frac{1}{2}-int, \varphi) L(\frac{1}{2}+int, \psi)}{|\zeta(\frac{n}{2}+int)|^{4}}.
\end{multline*}
Stirling's approximation implies that the right-hand side equals
\begin{multline*}
\frac{|a_{n}|^{2}}{16\pi^{4-2n}} \left(\frac{2}{n} \right)^{n-1} \rho_{\varphi}(1) \overline{\rho_{\psi}(1)} \left|\Gamma \left(\frac{3-n}{4}+\frac{it_{\varphi}}{2} \right) \right|^{2} \left|\Gamma \left(\frac{3-n}{4}+\frac{it_{\psi}}{2} \right) \right|^{2}\\
\times L \left(\frac{3-n}{2}, \varphi \right) L \left(\frac{3-n}{2}, \psi \right) \int_{T}^{2T} \frac{L(\frac{1}{2}-int, \varphi) L(\frac{1}{2}+int, \psi)}{|\zeta(\frac{n}{2}+int)|^{4}} \frac{dt}{t^{n-1}}+O_{n, \varphi, \psi, \epsilon}(T^{1-n+\epsilon}).
\end{multline*}
The integration over $t$ closely resembles that of Huang~\cite[Equation~(4.2)]{Huang2021-3}. Following~the procedure in~\cite[Section~4]{Huang2021-3} \textit{mutatis mutandis}, we obtain the expression
\begin{equation*}
\frac{1}{T} \int_{T}^{2T} \frac{L(\frac{1}{2}-int, \varphi) L(\frac{1}{2}+int, \psi)}{|\zeta(\frac{n}{2}+int)|^{4}} \frac{dt}{t^{n-1}}
 = \mathcal{D}_{\varphi, \psi}+\mathcal{D}_{\psi, \varphi}+\mathcal{O}_{\varphi, \psi}+\mathcal{O}_{\psi, \varphi}+O_{n, \varphi, \psi, \epsilon}(T^{-n+\epsilon}),
\end{equation*}
where the notation is borrowed from Huang with the only difference being that $2$ is replaced by $n \geq 3$. The diagonal contribution is explicitly computed in terms of certain well-behaved Dirichlet series, whereas the off-diagonal contribution is estimated via the $\delta$-symbol method. Consequently, the diagonal term dominates the off-diagonal term, leading to the asymptotic formula\footnote{There exists a typographical error in the argument of the gamma function in the first display on~\cite[Page 1240]{Huang2021-3}.}
\begin{multline*}
\frac{T^{n-2}}{\log T} \int_{T}^{2T} |\mu_{n, t}(E_{2, 1, \ldots, 1}(\cdot, \eta, \varphi))|^{2} dt \sim \frac{|a_{n}|^{2}}{16\pi^{4-2n}} \left(\frac{2}{n} \right)^{n-1} |\rho_{\varphi}(1)|^{2} \left|\Gamma \left(\frac{3-n}{4}+\frac{it_{\varphi}}{2} \right) \right|^{4}\\ \times L \left(\frac{3-n}{2}, \varphi \right)^{2} \frac{2 \log n}{\zeta(2)} L(1, \operatorname{ad} \varphi) H_{\varphi},
\end{multline*}
where $H_{\varphi} \coloneqq H_{\varphi, \varphi}(0, \frac{n}{2}-1, \frac{n}{2}-1)$ for $H_{\varphi, \psi}(s, s_{1}, s_{2})$ as defined by~\cite[Equation~(5.3)]{Huang2021-3}.~If we assemble the resulting factors into
\begin{equation}\label{eq:C-n}
C_{n}(\varphi) \coloneqq \frac{12 |a_{n}|^{2} \log n}{\pi} \left(\frac{2\pi}{n} \right)^{n-1} H_{\varphi},
\end{equation}
then it follows from~\eqref{eq:V} and~\eqref{eq:adjoint} that
\begin{equation*}
\frac{T^{n-2}}{\log T} \int_{T}^{2T} |\mu_{n, t}(E_{2, 1, \ldots, 1}(\cdot, \eta, \varphi))|^{2} dt \sim C_{n}(\varphi) L \left(\frac{3-n}{2}, \varphi \right)^{2} V_{n}(\varphi).
\end{equation*}
Furthermore, if $\varphi \ne \psi$, then
\begin{multline*}
T^{n-2} \int_{T}^{2T} \mu_{n, t}(E_{2, 1, \ldots, 1}(\cdot, \eta, \varphi)) \overline{\mu_{n, t}(E_{2, 1, \ldots, 1}(\cdot, \eta, \psi))} dt\\ \sim \frac{3|a_{n}|^{2} \log n}{4\pi^{6-2n}} \left(\frac{2}{n} \right)^{n-1} \rho_{\varphi}(1) \overline{\rho_{\psi}(1)} \left|\Gamma \left(\frac{3-n}{4}+\frac{it_{\varphi}}{2} \right) \right|^{2} \left|\Gamma \left(\frac{3-n}{4}+\frac{it_{\psi}}{2} \right) \right|^{2}\\
\times L \left(\frac{3-n}{2}, \varphi \right) L \left(\frac{3-n}{2}, \psi \right) L(1, \varphi \otimes \psi) H_{\varphi, \psi}.
\end{multline*}
Taking the limit $T \to \infty$ completes the proof of Theorem~\ref{thm:quantum-variance}.


\newcommand{\etalchar}[1]{$^{#1}$}
\providecommand{\bysame}{\leavevmode\hbox to3em{\hrulefill}\thinspace}
\providecommand{\MR}{\relax\ifhmode\unskip\space\fi MR }
\providecommand{\MRhref}[2]{%
  \href{http://www.ams.org/mathscinet-getitem?mr=#1}{#2}
}
\providecommand{\Zbl}{\relax\ifhmode\unskip\space\thinspace\fi Zbl }
\providecommand{\MR}[2]{%
  \href{https://zbmath.org/?q=an:#1}{#2}
}
\providecommand{\doi}{\relax\ifhmode\unskip\space\thinspace\fi DOI }
\providecommand{\MR}[2]{%
  \href{https://doi.org/#1}{#2}
}
\providecommand{\SSNI}{\relax\ifhmode\unskip\space\thinspace\fi ISSN }
\providecommand{\MR}[2]{%
  \href{#1}{#2}
}
\providecommand{\ISBN}{\relax\ifhmode\unskip\space\thinspace\fi ISBN }
\providecommand{\MR}[2]{%
  \href{#1}{#2}
}
\providecommand{\arXiv}{\relax\ifhmode\unskip\space\thinspace\fi arXiv }
\providecommand{\MR}[2]{%
  \href{#1}{#2}
}
\providecommand{\href}[2]{#2}

\end{document}